\newcommand{\R}{\mathbb{R}}
\newcommand{\C}{\mathbb{C}}
\title{Lifting tensors from orbifold quotients}
\author{Ricardo A. E. Mendes}
\newtheorem{theorem}{Theorem}[section]
\newtheorem*{theorem2}{Theorem}
\newtheorem{theorem3}{Theorem}
\newtheorem{proposition}{Proposition}
\theoremstyle{remark}
\begin{document}

\maketitle

\section{Introduction and Main Results}

Let $M$ be a Riemannian manifold and $G$ a Lie group acting on $M$ properly by isometries. The orbit space $M/G$ is naturally a metric space with curvature (locally) bounded from below, more precisely an \emph{Alexandrov space} --- see \cite{Grove02}.

A natural question by K. Grove is to describe the metrics on the space $M/G$ which are induced by some smooth $G$-invariant metric on $M$.

We will consider the case where $M/G$ lies in the more restrictive class of \emph{Riemannian orbifolds}, defined as in \cite{LytchakThorbergsson10}: $M/G$ has a cover $\mathcal{U}$ by open sets, each isometric to a quotient $N/\Gamma$ of a Riemannian manifold $N$ by a finite group of isometries $\Gamma$. These ``charts'' satisfy certain compatibility relations, and in fact the metric structure on $M/G$ determines a unique orbifold structure on $M/G$, as defined in \cite{BridsonHaefliger}. The familiar tools from differential geometry often apply to orbifolds. For example smooth tensors (in particular smooth functions) are defined first locally on each $U\in\mathcal{U}$ with $U= N/\Gamma$ to be the smooth $\Gamma$-invariant tensors on $N$. Globally on $M/G$ one defines smooth tensors to be collections of smooth tensors on each $U\in\mathcal{U}$ which agree on intersections.

The main result in the present paper is a solution to K.Grove's question in case the quotient is an orbifold:
\begin{theorem}
\label{mainthm}
Let $M$ be a Riemannian manifold, with the proper isometric action of a Lie group $G$. Assume that the quotient $M/G$ is (isometric to) a Riemannian orbifold. Then any smooth symmetric $2$-tensor $g$ on $M/G$ is induced by some smooth $G$-invariant symmetric $2$-tensor $\tilde{g}$ on $M$. Moreover if $g$ is a metric, then $\tilde{g}$ can be chosen to be a metric as well.
\end{theorem}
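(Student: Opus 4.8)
The plan is to make the problem local, reduce it to a linear model by the slice theorem, and there exploit the structure theory of representations whose quotient is a Riemannian orbifold.

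First I would localize. Fix a $G$-invariant smooth partition of unity $\{\varphi_\alpha\}$ on $M$ subordinate to a cover by $G$-invariant tubular neighborhoods of orbits --- equivalently, an orbifold partition of unity on $M/G$ pulled back by the projection $\pi$. Since symmetric $2$-tensors form a vector space, and a combination of positive-definite symmetric $2$-tensors with non-negative weights summing to $1$ is again positive-definite, it suffices to produce on each tube $T_\alpha$ around an orbit $Gp$ a smooth $G$-invariant symmetric $2$-tensor $\tilde g_\alpha$ inducing $g$ over $\pi(T_\alpha)$, and positive-definite on $T_\alpha$ if $g$ is a metric; one then takes $\tilde g=\sum_\alpha\varphi_\alpha\tilde g_\alpha$. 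By the slice theorem, $T_\alpha$ is $G$-equivariantly diffeomorphic to an associated bundle $G\times_K V$, with $K=G_p$ the (compact) isotropy group and $V$ the slice, which may be taken to be an orthogonal $K$-representation; then $\pi(T_\alpha)$ is the orbifold $V/K$. It further suffices to lift $g$ first to a $K$-invariant symmetric $2$-tensor on $V$ inducing $g$ on $V/K$, and afterwards extend it over the bundle. So the heart of the matter is an orthogonal representation $K$ on $V$ such that $V/K$ is a Riemannian orbifold.

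For the linear model I would invoke the description of such representations in \cite{LytchakThorbergsson10}: $V/K$ is a Riemannian orbifold exactly when the representation is infinitesimally polar. In the principal, polar case there is a section $\Sigma\subseteq V$ --- a linear subspace meeting every orbit, and orthogonally --- with finite generalized Weyl group $W$, and the inclusion induces a canonical isometry of Riemannian orbifolds $\Sigma/W\cong V/K$; under it a smooth symmetric $2$-tensor on $V/K$ is nothing but a smooth $W$-invariant symmetric $2$-tensor $g_\Sigma$ on $\Sigma$, since $\Sigma/W$ carries its tautological chart $(\Sigma,W)$. The finitely many exceptional, non-polar infinitesimally polar representations would be handled by a parallel ad hoc analysis. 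Thus the problem becomes: extend $g_\Sigma$, a smooth $W$-invariant symmetric $2$-tensor on the Euclidean space $\Sigma$, to a smooth $K$-invariant symmetric $2$-tensor on $V$ whose horizontal part over the principal stratum $V_{\mathrm{princ}}$ --- where the section directions are exactly the horizontal ones --- is the $K$-transport of $g_\Sigma$.

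Over $V_{\mathrm{princ}}\cong K\times_{N_K(\Sigma)}\Sigma_{\mathrm{princ}}$ one writes down such a tensor directly, combining the transported $g_\Sigma$ along section directions with the orbit-direction part of a fixed $K$-invariant reference metric, the two blocks declared orthogonal; if $g$ is a metric, this is positive-definite on $V_{\mathrm{princ}}$. I expect the main obstacle to be the smooth extension of this tensor across the singular stratum $V_{\mathrm{sing}}$. This is precisely where the orbifold hypothesis is used: smoothness of $g$ as an orbifold tensor means exactly that $g_\Sigma$ is smooth across the reflection walls of $\Sigma/W$, and this should force the polar-coordinate expression above to close up smoothly along $V_{\mathrm{sing}}$; verifying this chart by chart, using the normal form of the metric near a singular orbit of a polar representation, is where the real work lies. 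Granting it, the $K$-invariant tensor on $V$ extends $G$-invariantly over $G\times_K V$ by the same device --- combining it, along the zero-section directions, with the orbit-direction part of a $G$-invariant reference metric on $G\times_K V$ --- with an analogous, easier smoothness check across the zero section. For the metric statement one carries out the construction inductively over the strata of $M/G$, inserting the reference metric along orbit directions at each stage; then at every point the lift is block-diagonal for the splitting into orbit and slice directions, the orbit block being the positive-definite reference metric and the slice block the lift of the restriction of $g$ to the corresponding sub-chart, positive-definite by the inductive hypothesis, so that $\tilde g$ is a metric.
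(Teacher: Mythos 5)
Your reduction skeleton is the same as the paper's: localize with a $G$-invariant partition of unity, pass to the slice representation $K\curvearrowright V$ via the slice theorem, invoke Lytchak--Thorbergsson to get polarity, and identify orbifold tensors on $V/K$ with $W$-invariant tensors on the section $\Sigma$. (One small correction here: there are no ``exceptional, non-polar infinitesimally polar representations'' to handle, because the slice representation of $K\curvearrowright V$ at the origin is $V$ itself, so an infinitesimally polar representation is already polar. Your deferred ``parallel ad hoc analysis'' is vacuous, but you should know why.)

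The genuine gap is at the step you explicitly wave through with ``Granting it'': extending the smooth $W$-invariant tensor $g_\Sigma$ from $\Sigma$ to a smooth $K$-invariant tensor on $V$. Your proposed construction --- transport $g_\Sigma$ along the $K$-action over the principal stratum, insert a reference metric in the orbit directions, and hope the orbifold hypothesis forces smoothness across $V_{\mathrm{sing}}$ --- fails already in the simplest example: for $SO(2)$ acting on $\R^2$ with section the $x$-axis and $g_\Sigma=dx^2$, the transported radial block is $(x\,dx+y\,dy)^{\otimes 2}/(x^2+y^2)$, which is not smooth at the origin even though $g_\Sigma$ is perfectly smooth and $W$-invariant. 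Smoothness of the orbifold tensor does \emph{not} by itself force the polar-coordinate expression to close up; producing a smooth invariant extension is the entire content of the paper's Theorem~\ref{extendingsym2tensors} and Proposition~\ref{linearversion}, and it requires real input: Dadok's classification to see that $W$ is a reflection group, the Hessian Theorem to write every polynomial $W$-invariant symmetric $2$-tensor as $\sum_i a_i\,\mathrm{Hess}(Q_i)$ with $Q_i,a_i$ invariant, Malgrange division to pass to the smooth category, and the Chevalley Restriction Theorem to push the $a_i$ and $Q_i$ out to $K$-invariant functions on $V$ (Hessians of invariant functions being automatically smooth invariant tensors with the required restriction and orthogonality properties). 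Since this is exactly ``where the real work lies,'' as you say, the proposal does not constitute a proof; it reproduces the paper's reduction and omits its core argument.
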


Theorem \ref{mainthm} is a consequence of an extension result
for symmetric $2$-tensors invariant under a \emph{polar} action. An isometric action on $M$ by a Lie group $G$ is called polar if there exists an immersed submanifold
 $\Sigma\subset M$ meeting all $G$-orbits orthogonally. Such a
 submanifold $\Sigma$ is called a section, and comes with a
 natural action by a discrete group of isometries, called its
 \emph{generalized Weyl group}.

In \cite{LytchakThorbergsson10} (Theorem 1.1) A.~Lytchak and G. Thorbergsson characterized isometric actions with orbifold quotients: they are precisely the ones that are \emph{infinitesimally polar}, that is, all slice representations are polar. Therefore Theorem \ref{mainthm} is a consequence of the following statement about polar actions:

\begin{theorem}
\label{extendingsym2tensors}
 Let $M$ be a polar $G$-manifold with section $\Sigma$, and $W(\Sigma)$ the generalized Weyl group associated to $\Sigma$.
Let $\sigma$ be a $W(\Sigma)$-invariant symmetric $2$-tensor on the section $\Sigma$.

Then there exists a smooth $G$-invariant symmetric $2$-tensor $\tilde{\sigma}$ on $M$ whose pull-back to $\Sigma$ equals $\sigma$.

Moreover, if $\sigma$ is a metric, then we can choose $\tilde{\sigma}$ to be a metric with respect to which the action of $G$ remains polar with the same sections.
\end{theorem}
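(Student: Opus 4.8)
The plan is to prove Theorem~\ref{extendingsym2tensors} by induction on $\dim M$, proving simultaneously the existence statement, the metric refinement, and the analogous statements for $G$-invariant functions and $1$-forms (which feed into the induction). The base case $\dim M=0$ is vacuous; likewise, if $G$ is finite one may take $\Sigma=M$, so that $W(\Sigma)$ is the effective quotient of $G$ and $\tilde\sigma:=\sigma$ already works. The inductive step has two parts: a localization reducing a general polar $G$-manifold to polar \emph{representations} of compact groups, and a treatment of those representations by a ``cone over the sphere'' construction that drops the dimension by one.

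\textbf{Localization.} Since the action is proper, $M/G$ is covered by the images of tubes $G\times_{G_p}S_p$ about the orbits $Gp$, $p\in\Sigma$, where $G_p$ is compact and $S_p=\exp_p(B_\varepsilon\cap\nu_p(Gp))$ for a fixed $G$-invariant background metric $Q$. By the standard fact that polarity passes to slice representations, $G_p$ acts polarly on $\nu_p(Gp)$ with section $T_p\Sigma$ and generalized Weyl group $W(\Sigma)_p$, and $\Sigma\cap S_p$ is the image under $\exp_p$ of that section. A $G$-invariant symmetric $2$-tensor on $G\times_{G_p}S_p$ amounts to a $G_p$-invariant symmetric $2$-tensor on $S_p$, together with a freely choosable $G_p$-invariant cross term and a $G_p$-invariant fibre metric on the $\mathfrak g/\mathfrak g_p$-directions; taking the cross term zero and the fibre metric from $Q$, the problem over the tube becomes: extend $\sigma|_{\Sigma\cap S_p}$ to a $G_p$-invariant symmetric $2$-tensor on $S_p$, i.e.\ the theorem for the polar $G_p$-manifold $S_p$ of dimension $\dim\nu_p(Gp)=\dim M-\dim Gp$ (the several components of $\Sigma$ lying in one tube are $G$-related, and the local extensions are consistent precisely because $\sigma$ is $W(\Sigma)$-invariant). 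When $\dim Gp>0$ this dimension is $<\dim M$, so the inductive hypothesis applies; one then glues the local extensions with a $G$-invariant partition of unity subordinate to the cover (available by properness). Both $G$-invariance and the condition $\iota^*\tilde\sigma=\sigma$ survive, because the partition functions sum to $1$, and positive definiteness survives, since a convex combination of positive definite forms is positive definite.

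\textbf{Polar representations.} This handles the tubes about fixed points and closes the induction. Let $K\curvearrowright V$ be a polar representation of a compact group with section $\mathfrak a$ and Weyl group $W$; splitting off $V^K$ (whose directions are unconstrained and whose orthogonal complement is again polar) we may assume $V^K=0$. Then $K$ acts polarly on the unit sphere $S(V)$, of dimension $\dim V-1$, with section $S(\mathfrak a)$ and Weyl group $W$, and \emph{without} fixed points, so the localization step and the inductive hypothesis apply to it. Writing $V\setminus\{0\}\cong(0,\infty)\times S(V)$ and $\mathfrak a\setminus\{0\}\cong(0,\infty)\times S(\mathfrak a)$, decompose $\sigma$ on $\mathfrak a\setminus\{0\}$ as $a\,dt^2+dt\odot b+c$, with $a$, $b$, $c$ one-parameter families of $W$-invariant functions, $1$-forms, and symmetric $2$-tensors on $S(\mathfrak a)$; extend each, $t$ by $t$, to a $K$-invariant object on $S(V)$ using the inductive hypothesis (and its function and $1$-form analogues), and reassemble into $\tilde\sigma$ on $V\setminus\{0\}$. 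The crucial point is to do this so that $\tilde\sigma$ extends smoothly across $0\in V$: the extensions must depend smoothly on $t$ down to $t=0$ and have the parity and vanishing orders in $t$ forced by smoothness of $\sigma$ at $0\in\mathfrak a$, which can be arranged by applying a fixed smooth extension operator in $t$ and symmetrizing. \emph{I expect this smoothness analysis at the cone point --- together with the analogous smoothness across the singular strata implicit in the localization step --- to be the main obstacle}: it is the step that genuinely uses the local structure of polar actions, upgrading from functions to symmetric $2$-tensors the compatibility of the smooth structures on $\Sigma/W(\Sigma)$ and $M/G$.

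\textbf{Metric and polarity refinement.} If $\sigma$ is a metric, positivity is retained throughout: in the cone construction one keeps the radial part $a>0$ (e.g.\ by extending $\log a$), and after completing the square one chooses the transverse part to be a genuine metric, adding if necessary a large $K$-invariant function vanishing on the section to absorb the cross term; the fibre metrics in the localization step are positive, and gluing uses convex combinations. For polarity with the same sections it suffices, by the standard characterization of sections (a complete submanifold meeting all orbits orthogonally is a section iff it is totally geodesic), that $\Sigma$ meet every orbit $\tilde\sigma$-orthogonally and be $\tilde\sigma$-totally geodesic. Orthogonality is immediate from the ``fibre $\perp$ base'' shape of the tube metrics and the product shape of the cone metrics near $\mathfrak a$, and the totally geodesic property is built into the local models (cone metrics keep $\mathfrak a$ totally geodesic, tube metrics keep the slice, hence $\Sigma\cap S_p$, totally geodesic when the slice data are). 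It is preserved under the partition-of-unity gluing: since every local metric restricts to $\sigma$ on $\Sigma$, a Christoffel-symbol computation shows that the second fundamental form of $\Sigma$ with respect to $\sum_a\rho_a\tilde\sigma_a$ equals $\sum_a\rho_a\,\mathrm{II}_a=0$ along $\Sigma$. (Here, and for positivity, it is convenient to encode $\sigma$ by the $Q_\Sigma$-self-adjoint positive $(1,1)$-tensor $A$ with $\sigma=Q_\Sigma(A\cdot,\cdot)$, where $Q_\Sigma=\iota^*Q$, and to transport $A$ rather than $\sigma$.) Carrying the totally geodesic condition through all the steps in parallel with the smoothness analysis is the most delicate part of the bookkeeping.
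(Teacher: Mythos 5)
Your localization step matches the paper's: both reduce, via the Slice Theorem and a $G$-invariant partition of unity, to the case of a polar representation of a compact group $K$ on a slice $V$ with section $\Sigma$ and Weyl group $W$. But your treatment of the polar representation itself has a genuine gap, and it sits exactly where you flag it. Writing $V\setminus\{0\}\cong(0,\infty)\times S(V)$ and extending the data $a$, $b$, $c$ sphere-by-sphere cannot be repaired by ``applying a fixed smooth extension operator in $t$ and symmetrizing.'' Smoothness of a tensor at $0\in V$ is not a condition of smoothness in $t$ down to $t=0$ plus parity: it requires that for every $k$ the $k$-th Taylor coefficient in $t$ of the extended family, as a tensor on $S(V)$, be the restriction of a \emph{polynomial} tensor of the appropriate degree on $V$. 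The inductive extension operator on $S(V)$ (built from partitions of unity and arbitrary cross-term choices) has no reason to send restrictions of polynomial data on $S(\Sigma)$ to restrictions of polynomial data on $S(V)$, so the reassembled $\tilde\sigma$ will in general fail to be smooth at the origin. To arrange the Taylor expansion at $0$ you would need to know that every $W$-invariant \emph{polynomial} symmetric $2$-tensor on $\Sigma$ is the restriction of a $K$-invariant polynomial symmetric $2$-tensor on $V$ --- a tensor-valued Chevalley restriction statement. That algebraic statement is the actual content of the problem, and it is precisely what the paper supplies: the Hessian Theorem shows that $\R[\Sigma,\mathrm{Sym}^2\Sigma^*]^W$ is generated over $\R[\Sigma]^W$ by Hessians of invariant polynomials $Q_i$; Malgrange division upgrades this to smooth coefficients, $\sigma=\sum a_i\,\mathrm{Hess}(Q_i)$; and the scalar Chevalley Restriction Theorem then extends $a_i$ and $Q_i$ to $K$-invariants, giving $\tilde\sigma=\sum\tilde a_i\,\mathrm{Hess}(\tilde Q_i)$ globally on $V$ with no cone-point analysis needed. (This requires $W$ to be a reflection group, which the paper gets from Dadok's classification after passing to the identity component and averaging over $K/K_0$.) Your induction, by contrast, is circular in effect: the step you defer is not a technical smoothness check but the theorem's core difficulty in polynomial form.

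Two smaller points. First, the Hessian form of the extension is also what makes the polarity refinement nearly free in the paper: $\mathrm{Hess}(\tilde Q_i)(X,Y)=0$ for $X$ vertical and $Y$ horizontal because $\tilde Q_i$ is constant on orbits, so the extended metric automatically keeps orbits orthogonal to the section; positivity at $0$ is checked by Schur's lemma on the isotypic decomposition of $V$. Second, your claim that the second fundamental form of $\Sigma$ with respect to $\sum_a\rho_a\tilde\sigma_a$ is $\sum_a\rho_a\,\mathrm{II}_a$ needs care: the Christoffel symbols of a convex combination of metrics involve the inverse metric and the derivatives of the $\rho_a$, and the local metrics agree along $\Sigma$ only on $T\Sigma\times T\Sigma$, not on all of $TM|_\Sigma$. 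This is secondary, but it is another place where the paper's construction (orthogonality built in, cross terms set to zero) avoids an argument you would still have to make.
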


The general idea of the proof is to combine Chevalley's Restriction Theorem with a description of 
$W(\Sigma)$-invariant symmetric $2$-tensors in terms of invariant functions. The latter follows from an algebraic result:

\begin{theorem}[Hessian Theorem]
\label{hessthm}
Let $W\subseteq O(V)$ be a finite reflection group, where $V$ is a Euclidean vector space of dimension $n$. Denote by $\R[V]^W$  the $\R$-algebra of $W$-invariant real-valued polynomials on $V$. 

Consider the space of $W$-invariant symmetric $2$-tensors on $V$ with polynomial coefficients, and denote it by $\R[V,\mathrm{Sym}^2V^*]^W$.

Then there are  $W$-invariant homogeneous polynomials $Q_1, \ldots Q_l$ whose Hessians form a basis for $\R[V,\mathrm{Sym}^2V^*]^W$ as a module over $\R[V]^W$. Here $l=\dim(\mathrm{Sym}^2 V ^*$)$=(n^2+n)/2$.
\end{theorem}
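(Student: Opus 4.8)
The plan is to show that the $R := \mathbb{R}[V]^W$-module $M := \mathbb{R}[V,\mathrm{Sym}^2 V^*]^W$ is free of rank $l$ and is generated, as such a module, by Hessians of invariant polynomials; one then obtains $Q_1,\dots,Q_l$ by selecting a homogeneous basis of $M$ among these Hessians. Write $S = \mathbb{R}[V]$. By Chevalley--Shephard--Todd, $R = \mathbb{R}[f_1,\dots,f_n]$ with $\deg f_i = d_i$, and $S$ is a free $R$-module on a homogeneous basis of the space $\mathcal{H}$ of harmonic polynomials, with $\mathcal{H} \cong \mathbb{R}[W]$ as graded $W$-modules; hence $S \cong R \otimes_{\mathbb{R}} \mathcal{H}$ as graded $R$-$W$-modules, so $M \cong R \otimes_{\mathbb{R}} (\mathcal{H} \otimes \mathrm{Sym}^2 V^*)^W$ is $R$-free of rank $\dim(\mathbb{R}[W] \otimes \mathrm{Sym}^2 V^*)^W = \dim \mathrm{Sym}^2 V^* = l$, and $M/R_+M \cong (\mathcal{H} \otimes \mathrm{Sym}^2 V^*)^W$. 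Since $\mathrm{Hess}$ is compatible with the gradings (lowering degree by $2$), graded Nakayama reduces the theorem to proving that the $R$-submodule $H := \langle\,\mathrm{Hess}(Q) : Q \in R\,\rangle_R$ equals $M$: one then selects homogeneous invariants $Q_1,\dots,Q_l$ whose Hessians reduce to a basis of $M/R_+M$, and these form the required basis of $M$.

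By the chain rule, for $Q = P(f_1,\dots,f_n)$ the tensor $\mathrm{Hess}(Q)$ is an $R$-linear combination of the $\mathrm{Hess}(f_i)$ and the symmetric products $df_i \odot df_j$, and all of the latter occur as Hessians of invariants as well (for instance $df_i \odot df_j$, up to a scalar and a correction coming from $\mathrm{Hess}(f_i f_j)$); thus $H = \sum_{i=1}^n R\,\mathrm{Hess}(f_i) + \langle\, df_i \odot df_j : i \le j \,\rangle_R$. By Solomon's theorem the module $\Omega^1(V)^W$ of invariant polynomial $1$-forms is $R$-free on $df_1,\dots,df_n$, so the second summand is precisely the image of the natural inclusion $\iota \colon \mathrm{Sym}^2_R\,\Omega^1(V)^W \hookrightarrow M$ (injective because off the reflection hyperplanes the $df_i$ form a coframe, so $\iota$ is generically an isomorphism, while its source is $R$-free hence torsion-free), a free submodule of rank $l$. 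Consequently $H = M$ is equivalent to the statement that the torsion module $N := \mathrm{coker}\,\iota$ --- supported, by the coframe remark, exactly on the union of the reflection hyperplanes, hence in codimension one --- is generated over $R$ by the classes of $\mathrm{Hess}(f_1),\dots,\mathrm{Hess}(f_n)$.

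To control $N$ I would induct on $|W|$, the case $W = \{1\}$ being the classical observation that $\mathbb{R}[V,\mathrm{Sym}^2 V^*]$ is generated over $\mathbb{R}[V]$ by the constant Hessians $\mathrm{Hess}(x_i x_j)$. For $p \notin V^W$ the isotropy $W_p$ is a proper reflection subgroup of $W$ (Steinberg), and the germ of $(V,W)$ at $Wp$ is, after recentering at $p$, that of $W_p$ acting on $V = V^{W_p} \oplus V'$ --- essentially on $V'$, trivially on $V^{W_p}$ --- whose Hessian statement reduces to the one for $W_p$ alone (handle the trivial directions by $\mathrm{Hess}(z_a z_b)$, the mixed directions by $\mathrm{Hess}(z_a h)$ with $h$ invariant together with Solomon's theorem, the rest by the inductive hypothesis). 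Hence $M/H$ is supported on $V^W$; assuming $W$ essential (the general case reduces the same way), $M/H$ is finite-dimensional, so $H = M$ becomes a claim in finitely many degrees: in $M/R_+M \cong (\mathcal{H} \otimes \mathrm{Sym}^2 V^*)^W$ the classes $\pi_{\mathcal{H}}(df_i \odot df_j)$ must span a subspace of codimension $n$, with the missing $n$ dimensions sitting in degrees $d_1-2,\dots,d_n-2$ and spanned by $\pi_{\mathcal{H}}(\mathrm{Hess}(f_i))$. This degree-by-degree count is the step I expect to be the main obstacle: it does not follow from generic rank alone, and it genuinely uses the numerology of the exponents $d_i$, Solomon's basis $\{\pi_{\mathcal{H}}(df_i)\}$ of $(\mathcal{H} \otimes V^*)^W$, the self-duality $V \cong V^*$ (so that coexponents equal exponents), and the Poincaré duality of $C = S/R_+S$; concretely one compares the Hilbert series of $M$, computed by Molien's formula via $\chi_{\mathrm{Sym}^2 V}(w) = \chi_{\Lambda^2 V}(w) + \chi_V(w^2)$ and Solomon's formulas for invariant $1$- and $2$-forms, with the graded span of the explicit classes above, or else dualizes and argues by apolarity (Macaulay duality) in the Gorenstein algebra $C$.
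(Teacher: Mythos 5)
Your structural reductions are sound and, in essence, a dual formulation of the paper's own ``general framework'': the freeness of $M=\R[V,\mathrm{Sym}^2V^*]^W$ of rank $l$ over $R=\R[V]^W$ via $\R[V]\cong R\otimes\mathcal{H}$, the chain-rule identity $H=\sum_i R\,\mathrm{Hess}(f_i)+\langle df_i\odot df_j\rangle_R$, and the graded Nakayama reduction to spanning in $M/R_+M\cong(\mathcal{H}\otimes\mathrm{Sym}^2V^*)^W$ are all correct. (Your localization/Steinberg induction showing $M/H$ is supported on $V^W$ is plausible but redundant for this purpose: graded Nakayama already reduces $H=M$ to the single finite-dimensional statement $M=H+R_+M$, with no support analysis needed.)

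The genuine gap is the step you yourself flag as ``the main obstacle'': proving that the classes of $\mathrm{Hess}(f_1),\dots,\mathrm{Hess}(f_n)$ and $df_i\odot df_j$ span $(\mathcal{H}\otimes\mathrm{Sym}^2V^*)^W$, equivalently that the $\pi_{\mathcal{H}}(df_i\odot df_j)$ have corank exactly $n$ with the deficiency sitting in degrees $d_i-2$ and filled by the $\pi_{\mathcal{H}}(\mathrm{Hess}(f_i))$. This is not a technical loose end --- it \emph{is} the theorem, and none of the tools you list delivers it uniformly. Solomon's formula computes the Hilbert series of invariant forms $(\mathcal{H}\otimes\Lambda^pV^*)^W$ in closed form, but the decomposition $\chi_{\mathrm{Sym}^2V}(w)=\chi_{\Lambda^2V}(w)+\chi_V(w^2)$ leaves the Molien sum $\frac{1}{|W|}\sum_w\chi_V(w^2)/\det(1-tw)$ with no type-independent evaluation; the paper computes $P_t(M)/P_t(R)$ separately for dihedral, classical (via cycle-index manipulations), and exceptional types (via fake degrees in CHEVIE), and then establishes linear independence of the chosen Hessians by evaluating at a regular vector and checking a determinant, by hand in the classical cases and by computer for $H_3,H_4,F_4,E_6,E_7,E_8$. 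A comparison of Hilbert series cannot by itself certify that your explicit classes span (that is circular without knowing the relations among the $\pi_{\mathcal{H}}(df_i\odot df_j)$), and the apolarity/Gorenstein-duality route is only named, not carried out. As written, the proposal reduces the Hessian Theorem to an equivalent finite-dimensional assertion and stops; to complete it you would either need to supply the uniform degree count you describe, or fall back on the case-by-case verification that constitutes the bulk of the paper's Section 3.
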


If one chooses a set of homogeneous generators for $\R[V]^W$, then each $Q_i\in\R[V]^W$ in the statement of the Hessian theorem can be chosen to be either a generator, or a product of two generators. For example when $W$ is the dihedral group of order $2n$ acting on $\R^2=\C$, it is well known that $\rho_1=|z|^2$, $\rho_2=\mathrm{Re}(z^n)$ generate the polynomial invariants, and in this case we can take $\{Q_1, Q_2, Q_3\}= \{\rho_1, \rho_2, \rho_1^2 \}$.

Here is a brief outline of the proof of theorem \ref{extendingsym2tensors}, using the Hessian Theorem. We first use the Slice theorem to reduce to the case where $M$ is a Euclidean vector space and $G$ acts linearly, that is, to the case of \emph{polar representations}. Then we reduce further to the case where $G$ is connected. Polar representations of connected groups were shown by Dadok \cite{Dadok85} to be orbit-equivalent to an isotropy representation of a symmetric space. This implies that the generalized Weyl group $W= W(\Sigma)$ is generated by reflections. Then the Hessian theorem, together with an argument involving the Malgrange Division theorem, implies that there are  $W$-invariant polynomials $Q_i$ and $a_i\in C^\infty(V)^W$ such that the given symmetric $2$-tensor $\sigma$ can be written as $ \sigma = \sum_i a_i \mathrm{Hess}(Q_i)$. Finally the Chevalley Restriction theorem says that $a_i,Q_i$ can be extended to $G$-invariant functions $\tilde{a_i},\tilde{Q_i}$ on $M$, so that we may define $ \tilde{\sigma} = \sum_i \tilde{a_i} \mathrm{Hess}(\tilde{Q_i})$.

For the proof of the Hessian theorem, we first reduce to the the case where $W$ is irreducible, and then use the classification of irreducible reflection groups by type. For each type, we compute the Poincar\'e series $P(t)$ of the graded vector space of polynomial symmetric $2$-tensors through Molien's formula. Then we find explicit sets $\{Q_i\}$ of homogeneous invariants whose degrees match $P(t)$, and show by direct computation of a determinant that the set  $\{\mathrm{Hess}Q_i\}$ is linearly independent. For the exceptional types a computer is used.

We note that theorem \ref{extendingsym2tensors} is analogous to Michor's Basic Forms Theorem about polar manifolds --- see \cite{Michor96} and \cite{Michor97}. Indeed, Michor's theorem states that for a polar $G$-manifold $M$ with section $\Sigma$, every smooth $W(\Sigma)$-invariant $p$-form on $\Sigma$ can be extended uniquely to a smooth $G$-invariant $p$-form on $M$, vanishing when contracted to vectors tangent to the $G$-orbits.
 
Our Hessian Theorem is an analogue of a theorem by Solomon about finite reflection groups --- see \cite{Kane} section 22. It says in particular that if $W\subset O(\Sigma)$ is a reflection group in a Euclidean vector space $\Sigma$, every $W$-invariant $p$-form on $\Sigma$ with polynomial coefficients can be written as a sum of terms of the form $Q_0\cdot dQ_1\wedge\ldots\wedge dQ_p$, where each $Q_i$ is a $W$-invariant polynomial.

One naturally wonders if the Hessian Theorem and Solomon's Theorem generalize to other types of tensors beyond Sym$^2$ and $\Lambda^p$, for example the higher symmetric powers. Such generalizations would imply the corresponding tensor \emph{extension} theorems for polar manifolds and tensor \emph{lifting} theorems for all $G$-manifolds with orbifold quotients.

The present paper is organized as follows: In section \ref{sectionpolar} we recall definitions and some facts about polar actions and prove Theorems \ref{mainthm} and \ref{extendingsym2tensors} using the Hessian Theorem. Section \ref{sectionhessthm} concerns the Hessian Theorem. It starts with general remarks, including a description of the framework common to the proofs in all types, followed by the actual proofs in each type: dihedral groups, classical groups, and finally exceptional groups --- see Theorems \ref{dihedral}, \ref{classical} and \ref{exceptional}, respectively.

Acknowledgements: This work was completed as part of my PhD, and I would like to thank my advisor W. Ziller for the long-term support. I would also like to thank A. Lytchak, H. Wilf, C. Krattenthaler, and P. Levande for useful communication.

\section{Extending metrics in polar manifolds}
\label{sectionpolar}

In the present section we first show how Theorem \ref{mainthm} follows from Theorem \ref{extendingsym2tensors}, and then we prove Theorem \ref{extendingsym2tensors} using the Hessian Theorem.

\begin{proof}[Proof of Theorem \ref{mainthm}]
Let $x\in M$, $K=G_x$ the isotropy and $V=(T_xGx)^\perp$ the slice. By Theorem 1.1 in \cite{LytchakThorbergsson10} the fact that $M/G$ is an orbifold implies that the action of $K$ on $V$ is polar. If $\Sigma\subset V$ is a section, a neighbourhood of $Gx$ in $M/G$ can then be identified with $\Sigma/W(\Sigma)$.

Because the action of $K$ on $V$ is polar with section $\Sigma$, the action of $G$ on $G\times_K V$ is polar with section the image $\bar{\Sigma}$ of $\{1\}\times \Sigma$ in $G\times_K V$, if an appropriate metric is chosen for $G\times_K V$. An example of such a metric is obtained as the quotient of the product metric of a left $G$-invariant, right $K$-invariant metric on $G$ with the Euclidean $K$-invariant metric on $V$.

Since $\bar{\Sigma}$ can be identified with $\Sigma$, and its generalized Weyl group is $W(\Sigma)$, it follows from Theorem \ref{extendingsym2tensors} that any smooth symmetric $2$-tensor on a small neighbourhood of $Gx$ in $M/G$ is induced by some smooth $G$-invariant tensor on $G\times_K V$, and hence also on a neighbourhood of the orbit $Gx$ in $M$ by the Slice Theorem. Finally with a $G$-invariant partition of unity we can define such a tensor on all of $M$.
\end{proof}

Now we turn to the proof of Theorem \ref{extendingsym2tensors}. We start by recalling the definition and a few facts about polar actions (see \cite{PalaisTerng87} for more information).

Let $M$ be a Riemannian manifold, and $G$ a Lie group acting on $M$ by isometries. The action of $G$ on $M$ is called \emph{polar}, and $M$ is called a polar manifold, if there exists an isometric immersion $i: \Sigma \to M$, called a \emph{section}, which meets all $G$-orbits and always orthogonally. Moreover we require that $\Sigma$ has no subcover section, that is, the immersion $i: \Sigma\to M$ does not factor as a covering map followed by an immersion $\Sigma \to \Sigma'\to M$. We note that $\Sigma$ is totally geodesic in $M$. The simplest example is the action of $SO(2)$ on $\R^2$ by rotations, where any straight line through the origin is a section. Also note that in \cite{PalaisTerng87} sections are required to be closed and embedded --- see \cite{GroveZiller} for a treatment of immersed sections.

To each section $\Sigma$ is associated a discrete group, called the generalized Weyl group, defined as the quotient $ W=W(\Sigma)=N(i\Sigma)/Z(i\Sigma) $ of the normalizer $N(i\Sigma)=\{g\in G\  |\ gi\Sigma=i\Sigma\}$ by the centralizer $Z(i\Sigma)=\{g\in G\ |\ gx=x\ \ \forall x\in i\Sigma\}$ of the image of $\Sigma$ in $M$. The natural action of $W$ on $i\Sigma$ lifts uniquely to an action on $\Sigma$, such that $i:\Sigma\to i\Sigma$ is $W$-equivariant. The $W$-orbits in $\Sigma$ are exactly the inverse images by $i$ of the $G$-orbits in $M$, and in fact the immersion $i:\Sigma\to M$ induces an isometry of quotient spaces $\Sigma /W\to M/G$. A less elementary fact is that the quotients also have the same smooth structure, in the following sense:
\begin{theorem2}[Chevalley Restriction Theorem]
Pull-back to the section $\Sigma$ induces an isomorphism $i^*:C^\infty(M)^G\to C^\infty(\Sigma)^W$ between the algebras of smooth invariant functions.
\end{theorem2}

Here is the linear version of Theorem \ref{extendingsym2tensors}. The proof makes use of the Hessian theorem.

\begin{proposition}
\label{linearversion}
Let $V$ be a polar $K$-representation, where $K$ is a compact Lie group, not necessarily connected, with section $\Sigma\subset V$ and generalized Weyl group $W$. Denote by $i:\Sigma\to V$ the inclusion.

Let $\sigma\in C^\infty(\Sigma,\mathrm{Sym}^2\Sigma^*)^W$ be a smooth $W$-equivariant map from $\Sigma$ to $\mathrm{Sym}^2\Sigma^*$. Then there exists
$\tilde{\sigma}\in C^\infty(V,\mathrm{Sym}^2 V^*)^K$ such that $i^*\tilde{\sigma}=\sigma$ and $\tilde{\sigma}(X,Y)=0$ for $X$ horizontal and $Y$ vertical. 

Moreover, if $\sigma$ is positive definite at $0$, then so is $\tilde{\sigma}$.
\end{proposition}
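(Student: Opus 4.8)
The plan is to carry out, in the linear setting, the argument sketched in the introduction after Theorem~\ref{hessthm}: reduce to connected $K$ so that the generalized Weyl group $W$ is a finite reflection group on $\Sigma$, use the Hessian Theorem (upgraded from polynomial to smooth coefficients via a division argument) to write $\sigma = \sum_i a_i\,\mathrm{Hess}_\Sigma Q_i$ with $Q_i\in\R[\Sigma]^W$ and $a_i\in C^\infty(\Sigma)^W$, extend $Q_i$ and $a_i$ to $V$ by the two Chevalley Restriction Theorems, and set $\tilde\sigma := \sum_i\tilde a_i\,\mathrm{Hess}_V\tilde Q_i$; then verify the three required properties.

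\emph{Reduction to connected $K$ with reflection Weyl group.} The identity component $K_0$ acts polarly on $V$ with the same section $\Sigma$, so $W_0 := W_{K_0}(\Sigma)$ is a normal subgroup of $W$ of finite index, and by Dadok's theorem the $K_0$-action is orbit-equivalent to an $s$-representation, whence $W_0$ is a finite reflection group on $\Sigma$. A $W$-invariant $\sigma$ is in particular $W_0$-invariant; if we produce a $K_0$-invariant $\tilde\sigma_0$ with $i^*\tilde\sigma_0 = \sigma$ and the horizontal--vertical vanishing, then averaging $\tilde\sigma_0$ over the finite group $K/K_0$ yields a $K$-invariant tensor, and using $W$-invariance of $\sigma$ together with $K = N_K(\Sigma)K_0$ one checks that the average still restricts to $\sigma$ on $\Sigma$ and retains the other properties. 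So we may assume $K$ connected and $W$ a finite reflection group acting on $\Sigma$.

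\emph{From polynomials to smooth functions.} Fix homogeneous generators of $\R[\Sigma]^W$ and apply the Hessian Theorem to $W\subseteq O(\Sigma)$: it produces homogeneous $W$-invariant polynomials $Q_1,\dots,Q_l\in\R[\Sigma]^W$, $l = \dim\mathrm{Sym}^2\Sigma^*$, with $\{\mathrm{Hess}_\Sigma Q_i\}$ a basis of the $\R[\Sigma]^W$-module $\R[\Sigma,\mathrm{Sym}^2\Sigma^*]^W$. The goal is the $C^\infty$ analogue: every smooth $W$-equivariant $\sigma\colon\Sigma\to\mathrm{Sym}^2\Sigma^*$ is $\sum_i a_i\,\mathrm{Hess}_\Sigma Q_i$ with $a_i\in C^\infty(\Sigma)^W$. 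Writing $C^\infty(\Sigma)^W = C^\infty(\R^n)\circ\rho$ for a Hilbert basis $\rho$ (Schwarz) and identifying $C^\infty(\Sigma,\mathrm{Sym}^2\Sigma^*)^W$ with the smooth $W$-equivariant maps, the polynomial module statement is promoted to the $C^\infty$-module statement by a Malgrange division argument --- the standard mechanism by which a finite generating set of a module over the polynomial invariants remains generating over the smooth invariants, once one tracks $W$-equivariance. I expect this upgrade to be the main obstacle of the proof: it is the one step where real analysis (division theorems, Tougeron/Whitney-type arguments) enters, and the equivariant bookkeeping is delicate.

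\emph{Extension and verification.} By the polynomial Chevalley Restriction Theorem $\R[V]^K\cong\R[\Sigma]^W$, each $Q_i$ extends to $\tilde Q_i\in\R[V]^K$; by the ($C^\infty$) Chevalley Restriction Theorem recalled above, each $a_i$ extends to $\tilde a_i\in C^\infty(V)^K$. Set $\tilde\sigma := \sum_i\tilde a_i\,\mathrm{Hess}_V\tilde Q_i\in C^\infty(V,\mathrm{Sym}^2V^*)^K$. Then: (i) a section through the origin of a polar representation is a linear, hence totally geodesic, subspace, and the restriction of $\mathrm{Hess}_V f$ to a totally geodesic submanifold equals the intrinsic Hessian of $f|_\Sigma$; thus $i^*\mathrm{Hess}_V\tilde Q_i = \mathrm{Hess}_\Sigma Q_i$ and, since $\tilde a_i|_\Sigma = a_i$, $i^*\tilde\sigma = \sum_i a_i\,\mathrm{Hess}_\Sigma Q_i = \sigma$. (ii) $\mathrm{grad}_V\tilde Q_i$ is $K$-invariant, hence horizontal (orthogonal to the orbits) everywhere, and on the principal stratum the horizontal distribution is integrable with totally geodesic leaves, namely the sections; therefore $\nabla_X\mathrm{grad}_V\tilde Q_i$ is again horizontal for $X$ horizontal there, so $\mathrm{Hess}_V\tilde Q_i(X,Y) = 0$ for $X$ horizontal and $Y$ vertical on a dense set, hence everywhere, giving $\tilde\sigma(X,Y) = 0$ as required. (iii) $\tilde\sigma_0$ is a $K$-invariant symmetric bilinear form on $V$ with $\tilde\sigma_0|_{T_0\Sigma} = \sigma_0$; since every $K$-orbit meets $\Sigma$, any $v\neq 0$ can be written $v = kw$ with $0\neq w\in\Sigma$, and $\tilde\sigma_0(v,v) = \tilde\sigma_0(w,w) = \sigma_0(w,w)$; hence if $\sigma$ is positive definite at $0$ then so is $\tilde\sigma$.
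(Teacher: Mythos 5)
Your construction of $\tilde\sigma$ is the same as the paper's: pass to $K_0$ so that Dadok's theorem makes $W_0$ a reflection group, apply the Hessian Theorem plus the Malgrange/Schwarz division step (the paper cites Lemma 3.1 of Field for exactly this), extend $a_i$ and $Q_i$ by the Chevalley Restriction Theorem, and average over $K/K_0$ using that $K=N_K(\Sigma)K_0$. The two verification steps are where you diverge, and in both cases your route is legitimate. For the horizontal--vertical vanishing the paper extends $X,Y$ to parallel vector fields and differentiates $f=d\tilde Q_i(X)$ along $\Sigma$, while you use that $\mathrm{grad}\,\tilde Q_i$ is tangent to the sections; these are essentially the same computation, but your ``on a dense set, hence everywhere'' deserves one more line, since the horizontal space jumps up in dimension at singular points, so a limit of horizontal vectors at principal points need not exhaust the horizontal space there --- the clean fix (implicit in both arguments) is that every horizontal vector at any point is tangent to \emph{some} section through that point, and $\mathrm{grad}\,\tilde Q_i$ is tangent to that fixed linear subspace by density of its principal points. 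Your positive-definiteness argument is genuinely different from, and considerably simpler than, the paper's: you observe that $\tilde\sigma(0)$ is a $K$-invariant quadratic form on $V$ whose restriction to $\Sigma$ is $\sigma(0)$, and since every nonzero $v$ is $K$-conjugate to a nonzero $w\in\Sigma$, positivity on $\Sigma$ forces positivity on $V$. The paper instead decomposes $V$ into $K$-irreducibles, invokes Schur's lemma, and arranges the degree-$2$ generators $Q_i$ to be the specific invariants $x_ix_j$ and $|P_{\Sigma_i}(v)|^2$ so as to compute $\tilde\sigma_0(0)$ explicitly; your argument renders that entire normalization unnecessary and uses only the already-established facts that $\tilde\sigma$ is $K$-invariant and restricts to $\sigma$.
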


Recall that vertical means tangent to the orbit, and horizontal means normal to the orbit.

\begin{proof}

Let $K_0$ denote the connected component of $K$ containing the identity, and $N(\Sigma)_0$ and $Z(\Sigma)_0$ the normalizer and centralizer of $\Sigma$ in $K_0$, which equal $N(\Sigma)_0=N(\Sigma)\cap K_0$ and $Z(\Sigma)_0=Z(\Sigma)\cap K_0$.

From Dadok's classification it follows that the representation of $K_0$ on $V$ is equivalent to the isotropy representation of a symmetric space --- see \cite{Dadok85}. In particular $W(\Sigma)_0=N(\Sigma)_0/Z(\Sigma)_0$ is a Weyl group, that is, a crystallographic reflection group. Therefore, by the Hessian theorem, there are homogeneous $W_0$-invariant polynomials $Q_1,\ldots Q_l$ whose Hessians generate the module  $\R[\Sigma,\mathrm{Sym}^2\Sigma^*]^{W_0}$ of all $W_0$-invariant symmetric $2$-tensors on $\Sigma$ with polynomial coefficients over the algebra of invariants $\R[V]^{W_0}$.

By an argument involving Malgrange's Division Theorem, the fact that $\{\mathrm{Hess}(Q_1)$,  $\ldots,\mathrm{Hess}(Q_l)\}$ generate $\R[\Sigma,\mathrm{Sym}^2\Sigma^*]^{W_0}$ over $\R[V]^{W_0}$ implies the corresponding statement in the smooth category. Namely, $\{\mathrm{Hess}(Q_1), \ldots, \mathrm{Hess}(Q_l)\}$ also generate the module $C^\infty(\Sigma,\mathrm{Sym}^2\Sigma^*)^{W_0}$ over $C^\infty(V)^{W_0}$ --- see Lemma 3.1 in \cite{Field77}.

Since $\sigma$ is $W(\Sigma)$-equivariant, it is also $W(\Sigma)_0$-equivariant, and so there are smooth $W_0$-invariants $a_i$ such that
$$ \sigma=\sum_i a_i\cdot\mathrm{Hess}(Q_i)$$

By the Chevalley Restriction Theorem, there are unique extensions of $a_i$ and $Q_i$ to
$$ \tilde{a}_i\in C^\infty(V)^{K_0} \qquad \tilde{Q}_i\in \R[V]^{K_0}$$

Define $\tilde{\sigma}_0$ by
$$ \tilde{\sigma}_0=\sum_i \tilde{a}_i\cdot\mathrm{Hess}(\tilde{Q}_i)\quad \in C^\infty(V,\mathrm{Sym}^2V^*)^{K_0}$$
and $\tilde{\sigma}$ by
$$ \tilde{\sigma}=\frac{1}{|K/K_0|}\sum_{h\in K/K_0} h\cdot\tilde{\sigma}_0 \quad \in C^\infty(V,\mathrm{Sym}^2V^*)^{K}$$

We claim that $i^*\tilde{\sigma}=\sigma$. The idea is to show that $i^*(g\cdot \tilde{\sigma}_0)=\sigma$ for all $g\in K$.

Indeed, given $g\in K$, $g^{-1}\Sigma$ is another section, and since the action of $K_0$ is polar with the same sections as $K$, it must act transitively on the sections, and thus there is  $h\in K_0$ such that $g^{-1}\Sigma=h^{-1}\Sigma$, that is, $gh^{-1}\in N(\Sigma)$.

Then
$$ g\cdot \tilde{\sigma}_0=gh^{-1}h\cdot \tilde{\sigma}_0=gh^{-1}\cdot \tilde{\sigma}_0$$
because $\tilde{\sigma}_0$ is $K_0$-equivariant. Applying $i^*$ to both sides gives
$$ i^*(g\cdot \tilde{\sigma}_0)=[gh^{-1}]\cdot i^*(\tilde{\sigma}_0)=[gh^{-1}]\cdot\sigma=\sigma$$
where $[gh^{-1}]$ denotes the class of $gh^{-1}$ in the quotient $W=N(\Sigma)/Z(\Sigma)$. This finishes the proof that $i^*\tilde{\sigma}=\sigma$.

Now we turn to the second statement. Let $X,Y\in T_pV$ with $X$ vertical and $Y$ horizontal, that is, tangent and normal to the orbit through $p$. Extend them to parallel (in the Euclidean metric) vector fields, also called $X$ and $Y$.

For each $\tilde{Q_i}$, let $f=d\tilde{Q_i}(X)$. Since $\Sigma$ is a vector subspace, $X$ is orthogonal to $\Sigma$ at every point of $\Sigma$. So at every regular $q\in \Sigma$, $X(q)$ is tangent to the orbit. Since $\tilde{Q}_i$ is constant on orbits, $f=0$ at every regular $q\in\Sigma$, and hence on all of $\Sigma$.

In particular Hess$(\tilde{Q}_i)_p (X,Y)= df(Y)=0$. Since $\tilde{\sigma}_0$ is a linear combination of such Hessians, and $\tilde{\sigma}$ the average of $\tilde{\sigma}_0$ over $K$ , we get $\tilde{\sigma} (X,Y)=0$ as well.

Finally, assume $\sigma$ is positive-definite at $0\in\Sigma$. We will show that $\tilde{\sigma}$ is positive definite at $0\in V$.

It suffices to do so for
$$\tilde{\sigma}_0=\sum_{i=1}^l \tilde{a}_i \cdot\mathrm{Hess}(\tilde{Q}_i)$$

Consider the decomposition of $V$ into irreducible $K$-representations
$$ V=\R^m\oplus V_1\oplus\cdots\oplus V_k$$
where $K$ fixes $\R^m$ and each $V_i$ is non-trivial. Theorem 4 in \cite{Dadok85} implies that $V_i$ are pairwise inequivalent and polar, with sections $\Sigma_i=\Sigma\cap V_i$. Moreover $$ \Sigma = \R^m\oplus \Sigma_1\oplus\cdots\oplus \Sigma_k$$
with $\Sigma_i$ pairwise inequivalent $W$-representations.

By Schur's lemma,
$$ \sigma(0)=A\oplus\lambda_1 \mathrm{Id}_{\Sigma_1}\oplus\cdots\oplus \lambda_k\mathrm{Id}_{\Sigma_k}$$
where $A$ is a positive-definite symmetric $m\times m$ matrix, and each $\lambda_i>0$.
We can rewrite this as
$$ \sigma(0)=\sum _{1\leq i ,j\leq m}\frac{a_{ij}}{2}\mathrm{Hess}(x_ix_j)\oplus \frac{\lambda_1}{2}\mathrm{Hess}(P_1)\oplus\cdots\oplus \frac{\lambda_k}{2}\mathrm{Hess}(P_k)$$
where $P_i\in\R[\Sigma]^W$ is given by $P_i(v)=|P_{\Sigma_i}(v)|^2$, that is, the norm square of the component of $v$ in $\Sigma_i$. The unique extension of $P_i$ to a $K$-invariant polynomial on $V$ is clearly $\tilde{P_i}(v)=|P_{V_i}(v) |^2$.

We may assume that 
$$ \{ Q_i\ |\ 1\leq i\leq l\ ,\ \deg Q_i=2\}=\{x_ix_j\ |\ 1\leq i\leq j \leq m\}\cup\{ P_1,\ldots P_k\}$$
because the Hessians of both sets form a basis for $(\mathrm{Sym}^2\Sigma)^W$. Therefore
$$ \tilde{\sigma}_0(0)=A\oplus \lambda_1\mathrm{Id}_{V_1}\cdots\oplus\lambda_k\mathrm{Id}_{V_k}$$
is positive-definite because $A$ is positive-definite and each $\lambda_i>0$.
\end{proof}

Now we are ready for the general case:

\begin{proof}[Proof of Theorem \ref{extendingsym2tensors} using Proposition \ref{linearversion}]

Let $i:\Sigma\to M$ be a section, that is, an isometric immersion meeting all $G$-orbits orthogonally which does not factor through a subcover of $\Sigma$.

Using $G$-invariant partitions of unity on $M$, it is sufficient to construct the desired symmetric $2$-tensor $\tilde{\sigma}$ locally, in a neighborhood of an arbitrary $G$-orbit $\mathcal{O}\subset M$.

Let $q\in \Sigma$ with image $p=i(q)\in\mathcal{O}\subset M$. Denote by $K=G_p$ the isotropy, and $V=(T_p\mathcal{O})^{\perp}$ the slice at $p$. $V$ is a polar $K$-representation with section $(di)_q:T_q\Sigma\to V$. The generalized Weyl group acting on $T_q\Sigma$ is the isotropy $W(\Sigma)_q$.

For $r>0$ small enough, the Slice Theorem can be used to describe the $r$-neighborhood $\mathcal{U}$ of $\mathcal{O}$. It is $G$-equivariantly diffeomorphic to $G\times_KB(V) = (G\times B(V))/K$, where $B(V)=\{v\in V \ |\ |v|<r\}$, and $K$ acts on $G\times B(V)$ by 
$$ (k, (g,v))\mapsto (gk^{-1}, kv)$$

The pull-back $\tau=\exp_q^*(\sigma)$ of the given $\sigma$ to $B(T_q\Sigma)=\{v\in T_q\Sigma \ |\ |v|<r\}$ is a smooth $W(\Sigma)_q$-invariant symmetric $2$-tensor. By Proposition \ref{linearversion}, $\tau$ is the pull-back (restriction) of a smooth $K$-invariant symmetric $2$-tensor $\tilde{\tau}$ on $B(V)$.

Choose two arbitrary smooth $K$-equivariant maps
$$ \tau_1 : V\to \mathrm{Sym}^2(T_p^*\mathcal{O}),\quad
\tau_2 : V\to V^*\otimes (T_p^*\mathcal{O})$$
and define $\tilde{\sigma}_0=\tilde{\tau} +\tau_1 +\tau_2 : V\to \mathrm{Sym}^2 (T_p^*\mathcal{U})$.

Since the action of $K$ on $G\times V$ is free, $\tilde{\sigma}_0$ extends to a unique smooth $G$-invariant symmetric $2$-tensor $\tilde{\sigma}$ on $\mathcal{U}$.

By construction the pull-back $i^*(\tilde{\sigma})$ of $\tilde{\sigma}$ to $i^{-1}(\mathcal{U})$ is a $W(\Sigma)$-invariant tensor which agrees with the given $\sigma$ on the connected component of $i^{-1}(\mathcal{U})$ containing $q$. Since $W(\Sigma)$ acts transitively on such connected components, they must agree on all of $i^{-1}(\mathcal{U})$. This finishes the proof of the first (non-metric) statement.

To deal with the case where $\sigma$ is a metric, we follow the same steps as above, with two modifications. First, when $\tilde{\tau}$ is defined, we note that by Proposition \ref{linearversion}, $\tilde{\tau}$ is positive definite at $0$, and preserves the relation of orthogonality between $K$-orbits and the section. By shrinking $r>0$ if necessary, we may assume that $\tilde{\tau}$  is positive definite on $B(V)$.  Second, when defining $\tilde{\sigma}_0$, $\tau_1$ must be positive definite on $B(V)$ and $\tau_2$ must be identically zero. Together these imply that $\tilde{\sigma}$ is positive definite on all of $\mathcal{U}$, and defines a metric with respect to which $G$-orbits and $\Sigma\cap\mathcal{U}$ are orthogonal. 

\end{proof}

\section{The Hessian Theorem for finite reflection groups}
\label{sectionhessthm}
\subsection{General Remarks}
The goal of this section is to give a proof of the Hessian Theorem (see Theorem \ref{hessthm}).

Recall some facts about finite reflection groups: First, the algebra of invariants, $\R[V]^W$, is a free polynomial algebra with as many generators as the dimension of $V$. This is known as Chevalley's theorem --- see \cite {Bourbaki} Chapter V. Such a set of homogeneous generators is called a set of \emph{basic invariants}. Second, $V$ is reducible as a $W$-representation if and only if $V=V_1\times V_2$ and $W=W_1\times W_2$ for two reflection groups $W_k\subset O(V_k)$ --- see section 2.2 in \cite{Humphreys}. Because of the latter, the following proposition reduces the proof of the Hessian theorem to the irreducible case.

\begin{proposition}
\label{product}
Let $W_k\subseteq O(V_i)$, $k=1,2$ be two finite reflection groups in the Euclidean vector spaces $V_k$, and let $W=W_1\times W_2\subset O(V)=O(V_1\times V_2)$. Then the conclusion of the Hessian theorem holds for $W\subset O(V)$ if and only if it holds for both  $W_k\subseteq O(V_k)$, $k=1,2$.
\end{proposition}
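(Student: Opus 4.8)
The plan is to exploit the way everything in sight decomposes along the product $V=V_1\times V_2$. First, the algebra of invariants factors as $\R[V]^W=\R[V_1]^{W_1}\otimes_\R\R[V_2]^{W_2}$, so a set of basic invariants for $W$ is the union of sets of basic invariants for $W_1$ and $W_2$ (with variables from the respective factors). Second, there is a $W$-equivariant decomposition
$$
\mathrm{Sym}^2 V^* = \mathrm{Sym}^2 V_1^* \;\oplus\; (V_1^*\otimes V_2^*) \;\oplus\; \mathrm{Sym}^2 V_2^*,
$$
and correspondingly
$$
\R[V,\mathrm{Sym}^2 V^*]^W
= \R[V]^W\otimes_{?}\big(\text{the three pieces}\big),
$$
more precisely: a polynomial symmetric $2$-tensor on $V$ invariant under $W=W_1\times W_2$ is, in block form relative to the splitting $TV=TV_1\oplus TV_2$, a triple consisting of a polynomial-coefficient $W$-invariant section of $\mathrm{Sym}^2 V_1^*$, one of $V_1^*\otimes V_2^*$, and one of $\mathrm{Sym}^2 V_2^*$. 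The first block, being $W_2$-invariant as a function of the $V_2$-variables with $W_2$ acting by an orthogonal (hence in particular nontrivial unless the section is constant in those variables — but averaging handles this) representation, can be expanded; the key point is that $\big(\mathrm{Sym}^2 V_1^*\otimes \R[V_2]\big)^{W_2}=\mathrm{Sym}^2V_1^*\otimes\R[V_2]^{W_2}$ since $W_2$ acts trivially on $\mathrm{Sym}^2 V_1^*$, and similarly for the third block. For the middle block $V_1^*\otimes V_2^*$, one uses Solomon-type / basic-differential arguments: $\big(V_i^*\otimes\R[V_i]\big)^{W_i}$ is free over $\R[V_i]^{W_i}$ on the differentials $dQ$ of the basic invariants $Q$ of $W_i$ (this is the classical statement that the module of invariant $1$-forms with polynomial coefficients is free on $dp_1,\dots,dp_{n_i}$), so the middle block is free over $\R[V]^W$ on the elements $dQ_\alpha\odot dQ_\beta'$ with $Q_\alpha$ a basic invariant of $W_1$ and $Q_\beta'$ a basic invariant of $W_2$.

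Next I would match these free module generators with Hessians. For the two outer blocks this is exactly the content of the Hessian theorem for the factors: if $\mathrm{Hess}(Q_1^{(1)}),\dots$ is a $\R[V_1]^{W_1}$-basis of $\R[V_1,\mathrm{Sym}^2 V_1^*]^{W_1}$, then, viewing each $Q_i^{(1)}$ as a polynomial on $V$ depending only on $V_1$-variables, its Hessian on $V$ is block-diagonal with the $V_1$-Hessian in the upper-left block and zero elsewhere; these, together with the analogous Hessians from $V_2$, and together with products $Q_\alpha Q_\beta'$ (one basic invariant from each factor) whose Hessians
$$
\mathrm{Hess}(Q_\alpha Q_\beta') = Q_\beta'\,\mathrm{Hess}(Q_\alpha) + Q_\alpha\,\mathrm{Hess}(Q_\beta') + dQ_\alpha\odot dQ_\beta'
$$
(using $dQ_\alpha\odot dQ_\beta' := dQ_\alpha\otimes dQ_\beta' + dQ_\beta'\otimes dQ_\alpha$) supply, modulo the outer-block generators already in hand, exactly the middle-block generators $dQ_\alpha\odot dQ_\beta'$. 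Counting: the outer blocks contribute $l_1+l_2$ Hessians where $l_i=\dim\mathrm{Sym}^2 V_i^*$, and the product invariants contribute $n_1 n_2$ Hessians, for a total $l_1+l_2+n_1 n_2 = \binom{n_1+1}{2}+\binom{n_2+1}{2}+n_1 n_2 = \binom{n_1+n_2+1}{2}=l$, as required. So the candidate set $\{Q_i\}$ for $W$ is: the $Q^{(1)}$'s, the $Q^{(2)}$'s, and the mixed products $Q_\alpha Q_\beta'$.

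The argument runs in both directions: the decomposition shows that a basis for the $W$-module is assembled from bases for the $W_k$-modules plus the mixed generators, so the Hessian theorem for both factors gives it for the product; conversely, restricting attention to tensors supported in the $\mathrm{Sym}^2 V_1^*$ block (i.e.\ constant in the $V_2$-directions and block-diagonal) recovers the $W_1$-statement from the $W$-statement, and symmetrically for $W_2$. The main obstacle — and the one place the reasoning is not a mechanical bookkeeping exercise — is the middle block: one must verify that the triangular system above is genuinely triangular, i.e.\ that after subtracting the $Q_\beta'\,\mathrm{Hess}(Q_\alpha)$ and $Q_\alpha\,\mathrm{Hess}(Q_\beta')$ corrections (which lie in the span of the outer-block generators over $\R[V]^W$, since $Q_\beta',Q_\alpha\in\R[V]^W$) the family $\{dQ_\alpha\odot dQ_\beta'\}$ is precisely a free basis of the middle block over $\R[V]^W$; this is where the classical freeness of the module of polynomial invariant $1$-forms on each $V_i$ (a case of Solomon's theorem) is used, and where one must be careful that the change of basis is invertible over the ring $\R[V]^W$ and not merely over its fraction field — linear independence over $\R[V]^W$ of the resulting Hessians then follows because it holds in each block separately and the blocks are independent.
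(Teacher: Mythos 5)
Your proposal is correct and takes essentially the same route as the paper's proof: the same three-block decomposition of $\mathrm{Sym}^2V^*$, the identification of the mixed block with $\R[V_1,V_1^*]^{W_1}\otimes\R[V_2,V_2^*]^{W_2}$ handled via Solomon's theorem, the product rule for $\mathrm{Hess}(\rho_i\psi_j)$ giving the triangular change of generators, and, for the converse, extending a tensor on $V_k$ to a block-supported $W$-invariant tensor on $V$ and restricting back. No substantive differences.
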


\begin{proof}
Let $i_k:V_k\to V_1\times V_2$ and $p_k:V_1\times V_2\to V_k$ be the natural inclusions and projections. As a $W$-representation, $\mathrm{Sym}^2(V^*)$ decomposes as
$$ \mathrm{Sym}^2(V^*) =\mathrm{Sym}^2(V_1^*)\oplus \mathrm{Sym}^2(V_2^*)\oplus (V_1^*\otimes V_2^*)$$
Denote by $i_{11}$ and $i_{22}$ the natural inclusions of the first two summands. All these maps are $W$-equivariant.

\begin{enumerate}
\item Assume the conclusion of the Hessian Theorem holds for $W\subset O(V)$. Thus there are $Q_j\in\R[V]^W$ whose Hessians form a basis for $\R[V,\mathrm{Sym}^2(V^*)]^W$. Then the restrictions $Q_j|_{V_k}=i_k^*Q_j$ generate $\R[V_k,\mathrm{Sym}^2(V_k^*)]^{W_k}$ as an $\R[V_k]^{W_k}$-module.

    Indeed, given $\sigma\in \R[V_k,\mathrm{Sym}^2(V_k^*)]^{W_k} $, define $$\tilde{\sigma}=i_{kk}\circ\sigma\circ p_k$$ Since $\tilde{\sigma}$ is $W$-equivariant, there are $a_j\in \R[V]^W$ such that $\tilde{\sigma}=\sum_j a_j\mathrm{Hess}(Q_j)$. Therefore
    $$ \sigma=i_k^*(\tilde{\sigma})=\sum_j (a_j|_{V_k})\mathrm{Hess}(Q_j|_{V_k})$$

\item Assume the conclusion of the Hessian Theorem holds for $W_k\subset O(V_k)$. Let $\rho_j\in\R[V_1]^{W_1}$, $j=1,\ldots n_1$ and $\psi_j\in\R[V_2]^{W_2}$, $j=1,\ldots n_2$ be basic invariants on $V_1$ and $V_2$ respectively, and $Q_j\in\R[V_1]^{W_1}$, for $j=1,\ldots (n_1^2+n_1)/2$, $R_j\in\R[V_2]^{W_2}$, for $j=1,\ldots (n_2^2+n_2)/2$ be homogeneous invariants whose Hessians form a basis for the corresponding spaces of equivariant symmetric $2$-tensors.

Claim: The Hessians of the following set of $W=W_1\times W_2$-invariant polynomials on $V=V_1\times V_2$ form a basis for the space of equivariant symmetric $2$-tensors on $V$:
$$ \{Q_j\} \cup \{R_j\}\cup \{\rho_i\psi_j , \quad i=1\ldots n_1,\  j=1\ldots n_2\}$$

Indeed, $\R[V,\mathrm{Sym}^2(V^*)]^W$ decomposes as
$$ \R[V,\mathrm{Sym}^2(V_1^*)]^W\oplus \R[V,\mathrm{Sym}^2(V_2^*)]^W\oplus\R[V,V_1^*\otimes V_2^*]^W$$
The first two pieces are freely generated over $\R[V]^W$ by Hess$Q_j$ and Hess$R_j$. The third piece can be rewritten as $\R[V,V_1^*\otimes V_2^*]^W=\R[V_1,V_1^*]^{W_1}\otimes\R[V_2,V_2^*]^{W_2}$. By Solomon's theorem, $\R[V_k,V_k^*]^{W_k}$ are freely generated by $d\rho_j$ and $d\psi_j$, so that $\R[V,V_1^*\otimes V_2^*]^W$ is freely generated by $d\rho_j\otimes d\psi_j$. To finish the proof of the Claim one uses the product rule
$$ \mathrm{Hess}(\rho_i\psi_j)= d\rho_i\otimes d\psi_j + \rho_i\mathrm{Hess}(\psi_j)+\psi_j\mathrm{Hess}(\rho_i) $$

\end{enumerate}
\end{proof}

The proposition above reduces the proof of the Hessian theorem to the irreducible case. The next step is to use the classification of finite irreducible reflection groups according to their Coxeter graphs, and prove the Hessian theorem for each of them.

\sloppy

The proofs for each type all fit in the following general framework:
\label{generalframework}
\begin{itemize}
\item Recall that the Poincar\'e series of a graded vector space $M=\bigoplus_{i=0}^\infty M_i$ is defined by $P_t(M)=\sum_{i=0}^\infty \dim(M_i)t^i$. In our situation we put $M=\R[V,\mathrm{Sym}^2V^*]^W$ and compute its Poincar\'e series using Molien's formula (see section 3.1 in \cite{NeuselSmith} ):
\begin{theorem2} [Molien's formula] 
 Let $G$ be a finite group, $\rho:G \to \mathrm{GL}(V)$ a representation, and $U$ another representation with character $\chi$. Then
$$ P_t(\R[V,U]^G)=\frac{1}{|G|}\sum_{g\in G}\frac{\chi(g)}{\det(\mathrm{Id}-t\rho(g))}$$
\end{theorem2}

 \item Note that $\R[V,\mathrm{Sym}^2V^*]^W$ is a free module over $\R[V]^W$, of rank equal to $l=\dim \mathrm{Sym}^2 V^*=n(n+1)/2$. This is a consequence of a theorem by Chevalley which says that $\R[V]$ is a free module over $\R[V]^W$, more precisely that $\R[V]=\R[V]^W\otimes \mathcal{I}$, where $\mathcal{I}$ is the regular representation of $W$ --- see \cite{Chevalley55}. Indeed, 
 $$ \R[V,\mathrm{Sym}^2V^*]^W=(\R[V]\otimes\mathrm{Sym}^2V^*)^W=\R[V]^W\otimes (\mathcal{I}\otimes\mathrm{Sym}^2V^*)^W$$
 and since $\mathcal{I}$ is the regular representation, $(\mathcal{I}\otimes\mathrm{Sym}^2V^*)^W$ is isomorphic to Sym$^2V^*$. Therefore $P_t(M)/P_t(\R[V]^W) $ is a polynomial which encodes the degrees of an $\R[V]^W$-basis for $M$.  Note also that  $\R[V]^W$ is free, so that $P_t(\R[V]^W)=\prod_{i=1}^n\frac{1}{1-t^{d_i}}$ where $d_i$ are the \emph{degrees} of $W$, that is, $d_i=\deg (\rho_i)$.
 
 \item We choose an appropriate subset $$T\subset \{\rho_i\ |\ 1\leq i\leq n\}\cup\{\rho_i\rho_j\ |\ 1\leq i\leq j\leq n\}$$ with $l=n(n+1)/2$ elements such that the degrees of $\{\mathrm{Hess}(Q), Q\in T \}$ are the same as those of a basis.

\item At this point it is enough to show that the chosen Hessians are linearly independent over $\R[V]^W$, because then the submodule generated by them will have the same Poincar\'e series as $M$, forcing them to be equal for dimension reasons in each degree. In order to show that such a set of tensors is linearly independent we show that their values at a particular vector $v\in V$ are linearly independent over $\R$. Here $v$ can be any regular vector.
\end{itemize}

\fussy

Now we follow the program outlined above first for the dihedral groups, then the reflection groups of classical type, that is, types A, B and D, and finally for the groups of exceptional type, namely $H_3$, $H_4$, $F_4$, $E_6$, $E_7$ and $E_8$. For the exceptional groups we need to use a computer for some of the calculations.

\subsection{Dihedral Groups}
Let $V=\R^2$ and $W=D_n$, the dihedral group with $2n$ elements. It is generated by $\{a,b\}$, where $a$ is counterclockwise rotation by $2\pi /n$ and $b$ is the reflection across the $x$-axis. Thus
$$ D_n=\{1,a,\ldots, a^{n-1},b,ab,\ldots a^{n-1}b\}$$

It is convenient to introduce complex notation: identify $(x,y)$ with $z=x+iy$. Then $a$ becomes complex multiplication with $\xi=e^{2\pi i/n}$, and $b$ becomes complex conjugation.

It is well known (see \cite{NeuselSmith} page 108) that we can take the basic invariants to be
$$ \rho_1(x,y)=z\bar{z}=x^2+y^2 \qquad \rho_2(x,y)=\mathrm{Re}(z^n)$$

Thus the degrees of $W$ are $d_1=2$, $d_2=n$, and
$$ P_t(\R[V]^W)=\frac{1}{(1-t^2)(1-t^n)}$$

\begin{proposition}
\label{dihedralPoincare}
For $n\geq 2$ let $W$ be the dihedral group with $2n$ elements acting on $V=\R^2$. The Poincar\'e series for the space of equivariant symmetric $2$-tensors is
$$P_t (\R[V,\mathrm{Sym}^2V^*]^W)=(1+t^2+t^{n-2})P_t(\R[V]^W) $$
\end{proposition}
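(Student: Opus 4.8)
The plan is to compute $P_t(\R[V,\mathrm{Sym}^2V^*]^W)$ directly from Molien's formula applied to the representation $U=\mathrm{Sym}^2V^*$, where $V=\R^2$ carries the standard $D_n$-action. First I would record the two types of group elements: the rotations $a^k$ ($k=0,\dots,n-1$), which act on $V$ with eigenvalues $\xi^k,\xi^{-k}$ (where $\xi=e^{2\pi i/n}$), and the reflections $a^k b$, which act with eigenvalues $+1,-1$. The contribution to $\det(\mathrm{Id}-t\rho(g))$ is then $(1-t\xi^k)(1-t\xi^{-k}) = 1 - t(\xi^k+\xi^{-k}) + t^2$ for the rotations and $(1-t)(1+t) = 1-t^2$ for the reflections. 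For the numerator I need the character $\chi$ of $\mathrm{Sym}^2V^*$: if an element has eigenvalues $\lambda,\mu$ on $V^*$ then its eigenvalues on $\mathrm{Sym}^2V^*$ are $\lambda^2,\lambda\mu,\mu^2$, so $\chi(g)=\lambda^2+\lambda\mu+\mu^2$. Thus for a rotation $a^k$, $\chi(a^k)=\xi^{2k}+1+\xi^{-2k}$, and for a reflection $\chi(a^kb)=1+(-1)+1=1$ (taking the dual into account changes nothing since the eigenvalues just get inverted and the expressions are symmetric).

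Next I would assemble Molien's sum into two pieces. The reflection part is straightforward:
$$\frac{1}{2n}\sum_{k=0}^{n-1}\frac{1}{1-t^2} = \frac{1}{2(1-t^2)}.$$
The rotation part is
$$\frac{1}{2n}\sum_{k=0}^{n-1}\frac{\xi^{2k}+1+\xi^{-2k}}{1-t(\xi^k+\xi^{-k})+t^2}.$$
Since the target is $(1+t^2+t^{n-2})P_t(\R[V]^W) = \dfrac{1+t^2+t^{n-2}}{(1-t^2)(1-t^n)}$, I would aim to show that the rotation part equals $\dfrac{1+t^2+t^{n-2}}{(1-t^2)(1-t^n)} - \dfrac{1}{2(1-t^2)}$, i.e. that it simplifies to $\dfrac{2(1+t^2+t^{n-2}) - (1-t^n)}{2(1-t^2)(1-t^n)} = \dfrac{1+2t^2+2t^{n-2}+t^n}{2(1-t^2)(1-t^n)}$. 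The cleanest route is to split the numerator $\xi^{2k}+1+\xi^{-2k}$ and evaluate each of the resulting three sums separately. Each is a sum over the $n$-th roots of unity of a rational function in $\xi^k$, which can be handled by partial fractions: factoring $1-t(\xi^k+\xi^{-k})+t^2 = (1-t\xi^k)(1-t\xi^{-k})$, one writes $\frac{\xi^{jk}}{(1-t\xi^k)(1-t\xi^{-k})}$ as a combination of geometric-type terms and uses the standard identity $\sum_{k=0}^{n-1}\frac{1}{1-s\xi^k} = \frac{n}{1-s^n}$ (valid as a formal power series or rational identity in $s$), together with its variants $\sum_k \frac{\xi^{mk}}{1-s\xi^k} = \frac{ns^{?}}{1-s^n}$ obtained by expanding in powers of $s$ and collecting residues mod $n$.

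The main obstacle I expect is the bookkeeping in these root-of-unity sums — in particular getting the exponents right so that the $t^{n-2}$ term (rather than, say, $t^{n}$ or $t^{n-1}$) emerges correctly, and handling the small cases $n=2,3,4$ where $n-2$ collides with $0$, $1$, or $2$ and the ``generic'' partial-fraction pattern may degenerate. To control this I would expand everything as a formal power series in $t$: $\frac{1}{(1-t\xi^k)(1-t\xi^{-k})} = \sum_{m\geq 0}\left(\sum_{a+b=m}\xi^{(a-b)k}\right)t^m$, multiply by $\xi^{2k}+1+\xi^{-2k}$, sum over $k$ (using $\sum_k \xi^{rk} = n$ if $n\mid r$ and $0$ otherwise), and read off the coefficient of $t^m$ as $n$ times the number of lattice solutions to a congruence condition. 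This converts the whole identity into an elementary count: the coefficient of $t^m$ in the full Poincaré series should equal the number of $W$-basis elements in degree $m$, which by the stated shape is the number of ways $m = 2\alpha + n\beta$ with an extra multiplicity of $1$, $1$, or $1$ according to the three monomials $1, t^2, t^{n-2}$. Verifying that these two counts agree for all $m\geq 0$, with separate attention to $n\leq 4$, completes the proof; as a sanity check one can confirm the low-degree terms by hand ($\dim$ in degrees $0,1,2$ should be $1,0,3$ for $n\geq 5$, matching $\mathrm{Sym}^2V^*$ being $3$-dimensional and fully invariant only after degree-$2$ invariants appear).
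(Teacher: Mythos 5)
Your setup is identical to the paper's: both proofs apply Molien's formula with the same eigenvalue data ($\xi^k,\xi^{-k}$ for rotations, $\pm1$ for reflections) and the same character values $1+\xi^{2k}+\xi^{-2k}$ and $1$. Where you diverge is in how the resulting sum over $W$ is evaluated. The paper multiplies through by $(t^2-1)(t^n-1)$, observes that the ratio $f(t)=P_t(\R[V,\mathrm{Sym}^2V^*]^W)/P_t(\R[V]^W)$ is a priori a polynomial of degree at most $n$ (freeness of the module over $\R[V]^W$), and then pins it down by evaluating at the $n+1$ points $t=0,1,\xi,\dots,\xi^{n-1}$; the point of choosing roots of unity is that at $t=\xi^k$ only the $j=\pm k$ terms of the sum survive, so each evaluation collapses to a one-line computation and no modular casework is needed. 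Your route instead expands everything as a formal power series and reduces the identity to a lattice-point count: the coefficient of $t^m$ on the Molien side is $\tfrac12\bigl(\#\{(a,b):a,b\ge 0,\ a+b=m,\ a-b\equiv 0,\pm2 \bmod n\}+[m\text{ even}]\bigr)$, to be matched against the number of representations $m=\epsilon+2\alpha+n\beta$ with $\epsilon\in\{0,2,n-2\}$. This is correct and entirely elementary, but note that it is not enough to spot-check low degrees: you must prove the count agrees for \emph{all} $m$, which amounts to counting how many terms of the arithmetic progression $m,m-2,\dots,-m$ lie in $\{0,\pm2\}\bmod n$ --- doable, but with genuine casework on $m\bmod 2$, $m \bmod n$, and the degenerate values $n=2,3,4$ that you rightly flag. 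The trade-off is that the paper's interpolation argument is shorter and avoids all congruence bookkeeping, while yours is more mechanical and generalizes more readily to other tensor types. One small correction to your sanity check: the coefficient of $t^2$ in $(1+t^2+t^{n-2})/((1-t^2)(1-t^n))$ for $n\ge 5$ is $2$, not $3$ (the two degree-$2$ basis contributions are $\mathrm{Hess}(\rho_1^2)$ and $\rho_1\cdot\mathrm{Hess}(\rho_1)$; your Molien-side count gives $(3+1)/2=2$ as well), so if you carry out the check expecting $3$ you will wrongly conclude something is off.
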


\begin{proof}
Since the eigenvalues of $a^j$ are $\xi^j$ and $\xi^{-j}$, we see that
$$ \det(1-ta^j)=(t-\xi^j)(t-\xi^{-j}) \qquad \mathrm{tr}(\mathrm{Sym}^2 a^j)=1+\xi^{2j}+\xi^{-2j}$$

Since $a^jb$ are reflections,
$$ \det(1-ta^jb)=1-t^2 \qquad \mathrm{tr}(\mathrm{Sym}^2 a^jb)=1$$

Applying Molien's formula gives
$$ \frac{P_t (\R[V,\mathrm{Sym}^2V^*]^W)}{P_t(\R[V]^W)}=\frac{(t^2-1)(t^n-1)}{2n}\left(\frac{n}{1-t^2}+\sum_{j=0}^{n-1}\frac{1+\xi^{2j}+\xi^{-2j}}{(t-\xi^j)(t-\xi^{-j})}\right)$$

Denoting this polynomial by $f(t)$, we want to show that $f(t)=1+t^2+t^{n-2}$. Since both have degrees less than or equal to $n$, it is enough to check that they have the same values at $n+1$ distinct points, which we can take to be $t=0,1,\xi,\xi^2,\ldots \xi^{n-1}$.

Start with $f(0)=\frac{1}{2n}(2n + 2\sum_j\xi^{2j})$. If $n=2$ this equals $2$, otherwise $\xi^2\neq 1$ and $f(0)=1$. In both cases the value of $f$ agrees with that of $1+t^2+t^{n-2}$. Next consider $t=\xi^k\neq 1,-1$. Only the terms $j=k,-k$ in the sum survive, and they are equal:

\begin{align*}
f(\xi^k) &= \frac{1}{2n}2(1+\xi^{2k}+\xi^{-2k})\cdot\left.\frac{(t^2-1)(t^n-1)}{(t-\xi^k)(t-\xi^{-k})}\right|_{t=\xi^k}\\
 &= \frac{(1+\xi^{2k}+\xi^{-2k})(\xi^{2k}-1)}{n(\xi^k-\xi^{-k})} \cdot\left.\frac{d(t^n-1)}{dt}\right|_{t=\xi^k}\\
  &=1+\xi^{2k}+\xi^{-2k} = (1+t^2+t^{n-2})|_{t=\xi^k}
\end{align*} 
as wanted. The remaining cases $t=1,-1$ are similar.
\end{proof}

\begin{theorem3}
\label{dihedral}
In the notations above, a basis for $\R[V,\mathrm{Sym}^2V^*]^W$ as a free $\R[V]^W$-module is
$$ \{\mathrm{Hess}(\rho_1), \mathrm{Hess}(\rho_1^2), \mathrm{Hess}(\rho_2)\}$$
\end{theorem3}

\begin{proof}
It's more convenient and enough to do the computation of the Hessians in the basis $z,\bar{z}$ instead of $x,y$:
$$ \mathrm{Hess}(z\bar{z})=\begin{bmatrix}
                            0 & 1 \\ 1 & 0
                           \end{bmatrix},\quad
\mathrm{Hess}(z^2\bar{z}^2)=\begin{bmatrix}
                            2\bar{z}^2 & 4z\bar{z} \\ 4z\bar{z} & 2z^2
                           \end{bmatrix} $$ $$
\mathrm{Hess}\left(\frac{z^n+\bar{z}^n}{2}\right)=\frac{n(n-1)}{2}\begin{bmatrix}
                            z^{n-2} & 0 \\ 0 & \bar{z}^{n-2}
                           \end{bmatrix}
$$

Listing the upper triangular entries of the matrices above into a $3\times 3$ matrix, we get:

$$
\begin{bmatrix}
 0 & 1 & 0\\ 2\bar{z}^2 & 4z\bar{z} & 2z^2 \\ \frac{n(n-1)}{2} z^{n-2} & 0 & \frac{n(n-1)}{2} \bar{z}^{n-2}
\end{bmatrix}
 $$

Since the determinant of the matrix above is $-2n(n-1)i\ \mathrm{Im}(z^n)\ne 0$, the Hessians are linearly independent over $\R[V]$, hence also over $\R[V]^W$, and since their degrees are $0$, $2$ and $n-2$, they must form a basis of $\R[V,\mathrm{Sym}^2V^*]^W$ by Proposition \ref{dihedralPoincare}.

\end{proof}
\subsection{Classical Groups}

We will use a combinatorial identity called the cycle index formula. Generating functions-type manipulations with this identity together with Molien's formula yield the Poincar\'e series of the symmetric  $2$-tensors.

Making a particular choice of basic invariants and of a regular vector $v$, the Hessians of the $n$ basic invariants evaluated at $v$ have particularly simple forms. Looking at these $n$ matrices it is then easy to see which sets of Hessians are linearly independent when evaluated at $v$.

We start by recalling the definitions and fixing notations. Let $V=V_n=\R^n$, with standard basis $e_1,\ldots, e_n$, which we declare to be orthonormal. Define $W=W_n\subset O(V)$ by:
\begin{itemize}
\item $W=S_n$, the group of permutation matrices, for type A;
\item $W=S_n\ltimes \{\pm 1\}^n$, the group of signed permutation matrices, for type B; and
\item $W=\{(\sigma, (\epsilon_1,\ldots, \epsilon_n))\in S_n\ltimes \{\pm 1\}^n \ |\ \epsilon_1\cdots\epsilon_n=1\}$ for type D.
\end{itemize}

Note: Strictly speaking the standard representation of the Weyl group of type $A_{n-1}$ is the subrepresentation of $S_n$ on $(1,\ldots,1)^\perp\subset V$, but by Proposition \ref{product} it is enough to prove the Hessian Theorem for the reducible representation $V$ instead.

Recall that a permutation $\sigma\in S_n$ can be decomposed into disjoint cycles, and denote by $k_i(\sigma)$ the number of cycles of length $i$. For a signed permutation $g=(\sigma,\epsilon)$, where $\epsilon:\{1,2,\ldots n\}\to\{\pm 1\}$, the cycles of $\sigma$ are divided into positive and negative, according to the sign of the product of the corresponding values of $\epsilon$. Denote by $k_i^+(g)$ (resp. $k_i^-(g)$) the number of positive (resp. negative) cycles of length $i$, so that $k_i(\sigma)=k_i^+(g)+k_i^-(g)$.

Our computations of the Poincar\'e series will rely on the Cycle Index Formula, a combinatorial identity of generating functions in the formal variables $z$, and $y_1,\ldots y_n$ (resp. $y_1^+,\ldots,y_n^+,y_1^-, \ldots y_n^-$) which counts the number of permutations (resp. signed permutations) having a given cycle decomposition: (see \cite{Wilf} chapter 4.7)

\begin{theorem2}[Cycle index formula for $S_n$]
$$  \sum_{n=1}^\infty \frac{z^n}{n!}\sum_{g\in S_n}\mathbf{x}^{\mathbf{k}}=\mathrm{exp}\left( \sum_{j=1}^\infty \frac{z^jx_j}{j} \right)$$
where $\mathbf{x}^{\mathbf{k}}$ denotes $x_1^{k_1(g)}x_2^{k_2(g)}\cdots x_n^{k_n(g)}$.
\end{theorem2}

From the formula above in type A, it is easy to prove the corresponding formulas in types B and D:
\begin{proposition}
 The cycle index formulas for types B and D are:
 \begin{itemize}

 \item Type B:
 $$  \sum_{n=1}^\infty \frac{z^n}{n!2^n}\sum_{g\in S_n\ltimes (\pm 1)^n}\mathbf{x}^{\mathbf{k}}=\mathrm{exp}\left( \sum_{j=1}^\infty \frac{z^j(x_j^+ +x_j^-)}{2j} \right)$$
where $\mathbf{x}^{\mathbf{k}}$ denotes $\prod_{j=1}^n(x_j^+)^{k_j^+(g)}(x_j^-)^{k_j^-(g)}$.

 \item Type D:
$$\sum_{n=1}^\infty\frac{z^n}{n!2^{n-1}} \sum_{g\in S_n\ltimes H_n}\mathbf{x}^{\mathbf{k}}=
\exp\left( \sum_{j=1}^\infty \frac{z^j(x_j^++x_j^-)}{2j} \right)+
\exp\left( \sum_{j=1}^\infty \frac{z^j(x_j^+-x_j^-)}{2j} \right) $$
where $\mathbf{x}^{\mathbf{k}}$ denotes $\prod_{j=1}^n(x_j^+)^{k_j^+(g)}(x_j^-)^{k_j^-(g)}$, and $H_n=\{\epsilon\in\{\pm 1\}^n \ \ |\ \ \epsilon_1\cdots\epsilon_n=+1\}$
 \end{itemize}

\end{proposition}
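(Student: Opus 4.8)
The plan is to deduce both identities from the cycle index formula for $S_n$ by a substitution of variables and a rescaling of $z$, the only input beyond bookkeeping being an elementary count of sign patterns on a single cycle. First I would record that count. Fix $\sigma\in S_n$ having $k_j$ cycles of length $j$. The coordinate set $\{1,\dots,n\}$ is partitioned by the cycles of $\sigma$, so a sign function $\epsilon$ is the same as an independent choice of signs on each cycle; and on a cycle of length $j$ the product map $\{\pm1\}^j\to\{\pm1\}$ is a surjective homomorphism, so exactly $2^{j-1}$ of the $2^j$ sign choices make that cycle positive and $2^{j-1}$ make it negative. Hence
$$ \sum_{\epsilon\in\{\pm1\}^n}\mathbf{x}^{\mathbf{k}(\sigma,\epsilon)}=\prod_{j}\bigl(2^{j-1}(x_j^{+}+x_j^{-})\bigr)^{k_j(\sigma)}. $$

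For type B I would sum this over $\sigma\in S_n$: the resulting expression is exactly the inner sum $\sum_{\sigma\in S_n}\prod_j x_j^{k_j(\sigma)}$ of the type A formula after the substitution $x_j\mapsto 2^{j-1}(x_j^{+}+x_j^{-})$. Making that substitution in the type A identity gives
$$ \sum_{n\ge1}\frac{z^n}{n!}\sum_{g\in S_n\ltimes(\pm1)^n}\mathbf{x}^{\mathbf{k}}=\exp\!\Bigl(\sum_{j\ge1}\frac{(2z)^j(x_j^{+}+x_j^{-})}{2j}\Bigr), $$
and replacing $z$ by $z/2$ yields the stated type B formula, the constant term being understood (or one simply compares coefficients of $z^n$ for $n\ge1$).

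For type D I would use that the indicator of $S_n\ltimes H_n$ inside $S_n\ltimes\{\pm1\}^n$ is $\tfrac12\bigl(1+\prod_i\epsilon(i)\bigr)$, together with the observation $\prod_i\epsilon(i)=\prod_{\text{cycles }c}\mathrm{sign}(c)=(-1)^{\sum_j k_j^{-}(g)}$. Thus
$$ \sum_{g\in S_n\ltimes H_n}\mathbf{x}^{\mathbf{k}}=\tfrac12\sum_{g\in S_n\ltimes(\pm1)^n}\mathbf{x}^{\mathbf{k}}+\tfrac12\sum_{g\in S_n\ltimes(\pm1)^n}(-1)^{\sum_j k_j^{-}(g)}\mathbf{x}^{\mathbf{k}}, $$
and the second sum on the right is obtained from the first by the substitution $x_j^{-}\mapsto-x_j^{-}$ for all $j$. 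Applying the type B formula once unchanged and once after $x_j^{-}\mapsto-x_j^{-}$, then adding the two and using $1/(n!\,2^{n-1})=2/(n!\,2^n)$, produces the type D formula.

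None of this presents a serious obstacle; the argument is routine generating-function manipulation once the single-cycle sign count is in hand. The points that require care are: keeping the powers of two consistent (the identity $\sum_j(j-1)k_j(\sigma)=n-\sum_j k_j(\sigma)$ is what makes the $z\mapsto z/2$ rescaling absorb the factors $2^{(j-1)k_j}$ cleanly), recognizing that twisting the hyperoctahedral sum by the product character is precisely the substitution $x_j^{-}\mapsto-x_j^{-}$, and the harmless discrepancy in the $z^0$ term between the exponentials and a sum indexed from $n=1$.
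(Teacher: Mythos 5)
Your proof is correct and follows essentially the same route as the paper's: the $2^{j-1}$ sign count per cycle reduces the hyperoctahedral sum to the $S_n$ cycle index via a substitution (you absorb the powers of two by rescaling $z$, the paper by factoring out $2^n$ directly), and your character-twist $x_j^-\mapsto -x_j^-$ for type D is exactly the paper's $A-B$ computation. No gaps.
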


\begin{proof}
We start with type B. Fix $\tau\in S_n$ and consider the inner sum in
$$\sum_{g\in S_n\ltimes (\pm 1)^n}\mathbf{x}^{\mathbf{k}}= \sum_{\tau\in S_n} \ \sum_{\epsilon\in (\pm 1)^n} \mathbf{x}^{\mathbf{k}} $$
The sign of each cycle $(i_1\ldots i_l)$ in $\tau$ depends only on the values of $\epsilon_1\ldots\epsilon_l$, and is actually positive for $2^{l-1}$ such values and negative for the remaining $2^{l-1}$. Thus this inner sum becomes
$$ \prod_{\text{cycles }\sigma}(2^{l(\sigma)-1}x^-_{l(\sigma)} +2^{l(\sigma)-1}x^+_{l(\sigma)} )=
2^n\prod_{j=1,\ldots n}\left(\frac{x_k^+ +x_j^-}{2}\right)^{k_j}$$
Now the result follows from the cycle index formula for $S_n$.

Now for type D, let $A$ denote the quantity $$A=\frac{1}{n!2^{n-1}} \sum_{g\in S_n\ltimes H_n}\mathbf{x}^{\mathbf{k}}$$ which we want to compute, and $B$ the complementary sum $$B=\frac{1}{n!2^{n-1}} \sum_{g\in (S_n\ltimes \{\pm 1\}^n-S_n\ltimes H_n)}\mathbf{x}^{\mathbf{k}}$$

Using the cycle index formula for type B one sees that
$$ A+B=2\exp\left( \sum_{j=1}^\infty \frac{z^j(x_j^++x_j^-)}{2j} \right)$$
$$ A-B=2\exp\left( \sum_{j=1}^\infty \frac{z^j(x_j^+-x_j^-)}{2j} \right)$$
and thus $A=(A+B+A-B)/2$ is as stated.

\end{proof}

Note that the terms appearing in Molien's formulas,
$$ P_t(\R[V]^W)=\frac{1}{|W|}\sum_{g\in W}\frac{1}{\det(1-t g)}$$
$$ P_t(\R[V,\mathrm{Sym}^2V^*]^W)=\frac{1}{|W|}\sum_{g\in W}\frac{\chi(g)}{\det(1-t g)}$$

depend only on $k_i(g)$ (resp. $k_i^+(g)$ and $k_i^-(g)$) . Indeed,
\begin{itemize}
\item In Type A, for a permutation matrix $g$, $ \mathrm{tr}(g)=k_1 $ and

$$\chi(g)=\mathrm{tr}(\mathrm{Sym}^2g)=\frac{\mathrm{tr}(g^2)+(\mathrm{tr}(g))^2}{2}= \frac{k_1^2}{2} +\frac{k_1}{2} +k_2$$
$$ \det(1-tg)=(1-t)^{k_1}(1-t^2)^{k_2}\cdots (1-t^n)^{k_n}$$

\item In Types B and D, for a signed permutation matrix $g$, $ \mathrm{tr}(g)=k_1^+-k_1^-$ and
$$\chi(g)=\mathrm{tr}(\mathrm{Sym}^2 (g))=k_1^+ +k_1^- +\binom{k_1^+}{2} +\binom{k_1^-}{2}+k_2^+ -k_1^+k_1^--k_2^-$$
$$ \mathrm{det} (1-tg)=\prod_{i=1}^n(1-t^i)^{k_i^+} \prod_{i=1}^n(1+t^i)^{k_i^-}$$
\end{itemize}

Combining the formulas above for $ \mathrm{det} (1-tg)$ with Molien's Formula, one gets:
\begin{proposition}
Replacing $x_j$ with $(1-t^j)^{-1}$ (resp. $x_j^+$ with $(1-t^j)^{-1}$ and $x_j^-$ with $(1+t^j)^{-1}$) in the right hand side of the Cycle Index Formula gives
$$\sum_{n=1}^\infty z^n P_t(\R[V_n]^{W_n})$$

\end{proposition}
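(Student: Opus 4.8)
The plan is to read the Cycle Index Formula as an identity of formal power series in $z$ whose $z^n$-coefficient is a polynomial in only finitely many of the variables $x_j$ (resp.\ $x_j^\pm$), so that one may legitimately substitute for each $x_j$ an arbitrary power series in $t$, the identity being preserved coefficient by coefficient in $z$. I would substitute $x_j\mapsto(1-t^j)^{-1}$ (resp.\ $x_j^+\mapsto(1-t^j)^{-1}$ and $x_j^-\mapsto(1+t^j)^{-1}$) on both sides, identify the resulting \emph{left}-hand side with $\sum_{n\ge1}z^nP_t(\R[V_n]^{W_n})$ by means of Molien's formula, and then conclude that the right-hand side equals the same thing, since the two sides of the Cycle Index Formula agreed before the substitution.

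Carrying this out, in type A the $z^n$-coefficient of the left-hand side after the substitution is
$$\frac{1}{n!}\sum_{g\in S_n}\prod_{j=1}^n(1-t^j)^{-k_j(g)}.$$
Using the identity $\det(1-tg)=\prod_j(1-t^j)^{k_j(g)}$ recalled above, this equals $\tfrac{1}{n!}\sum_{g\in S_n}\det(1-tg)^{-1}=\tfrac{1}{|S_n|}\sum_{g\in S_n}\det(1-tg)^{-1}$, which is exactly $P_t(\R[V_n]^{S_n})$ by Molien's formula. In types B and D I would argue identically: the prefactors $\frac{1}{n!\,2^n}$ and $\frac{1}{n!\,2^{n-1}}$ are $\frac{1}{|W_n|}$, since $|W_n|=2^nn!$ and $2^{n-1}n!$ respectively, and under $x_j^+\mapsto(1-t^j)^{-1}$, $x_j^-\mapsto(1+t^j)^{-1}$ the monomial $\prod_j(x_j^+)^{k_j^+(g)}(x_j^-)^{k_j^-(g)}$ turns into $\prod_j(1-t^j)^{-k_j^+(g)}(1+t^j)^{-k_j^-(g)}=\det(1-tg)^{-1}$ by the formula for $\det(1-tg)$ in types B and D recalled above. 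Summing the $z^n$-coefficients over $n$ then gives $\sum_{n\ge1}z^nP_t(\R[V_n]^{W_n})$ on the left, hence also on the right.

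I do not expect a real obstacle: this is a bookkeeping identity that matches the Cycle Index Formula with Molien's formula term by term, once the earlier expressions for $\det(1-tg)$ in terms of cycle type are in hand. The only point that needs (minor) care is the precise meaning of ``substituting $x_j=(1-t^j)^{-1}$'', since that series has nonzero constant term and a naive simultaneous substitution into a power series in infinitely many variables would be ill-defined; the resolution is exactly that each $z^n$-coefficient of either side involves only $x_1,\ldots,x_n$ (because $k_j(g)=0$ for $j>n$), so the substitution is performed separately in each degree in $z$ and is unambiguous. Alternatively, one may interpret all the identities analytically for $|t|<1$ and $z$ in a small disc, where every series involved converges absolutely.
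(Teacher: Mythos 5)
Your proof is correct and follows exactly the route the paper intends: the paper offers no written proof beyond the remark ``Combining the formulas above for $\det(1-tg)$ with Molien's Formula, one gets,'' and your argument simply fills in that combination, substituting into the left-hand side of the Cycle Index Formula and matching it term by term with Molien's formula via the cycle-type expressions for $\det(1-tg)$. Your extra remark about why the substitution $x_j\mapsto(1-t^j)^{-1}$ is well defined degree by degree in $z$ is a worthwhile clarification the paper leaves implicit.
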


To compute the Poincar\'e series of $\R[V,\mathrm{Sym}^2 V^*]^W$ in terms of the Poincar\'e series of $\R[V]^W$, we first apply an appropriate differential operator to the Cycle Index Formula, and then use Molien's formula together with the Proposition above. 

\begin{proposition}
\label{Poincare}
One has the following formulas for
$$ \frac{P_t(\R[V,\mathrm{Sym}^2V^*]^W)}{P_t(\R[V]^W)} $$
\begin{itemize}
\item In type A
$$ \frac{1-t^n}{1-t}+\frac{(1-t^{n-1})(1-t^n)}{(1-t)(1-t^2)}$$
\item In Type B
$$ \frac{1-t^{2n}}{1-t^2}+\frac{(1-t^{2n-2})(1-t^{2n})t^2}{(1-t^2)(1-t^4)} $$
\item In type D
$$ \frac{1-t^{2n}}{1-t^2} +\frac{(t^2+t^{n-2})(1-t^{2n-2})(1-t^n)}{(1-t^2)(1-t^4)} $$
\end{itemize}
\end{proposition}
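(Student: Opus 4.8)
The plan is to carry out the two-step recipe announced just before the statement: first turn the character $\chi$ into a differential operator acting on the Cycle Index Formula, then substitute the $t$-dependent values of the formal variables and read off the coefficient of $z^n$.

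For the operator, note that in each type the expressions for $\chi(g)$ recalled above write $\chi$ as a fixed polynomial in $k_1,k_2$ (type A) or in $k_1^\pm,k_2^\pm$ (types B and D). Writing $\theta_j=x_j\partial_{x_j}$ and $\theta_j^\pm=x_j^\pm\partial_{x_j^\pm}$ for the Euler operators, one has $\theta_j(\mathbf{x}^{\mathbf{k}})=k_j(g)\,\mathbf{x}^{\mathbf{k}}$, so replacing each $k_i$ (resp.\ $k_i^\pm$) by the corresponding $\theta$ and each $\binom{k}{2}$ by $\tfrac12(\theta^2-\theta)$ produces an operator $D$ with $D(\mathbf{x}^{\mathbf{k}})=\chi(g)\,\mathbf{x}^{\mathbf{k}}$. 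Explicitly $D_A=\tfrac12(\theta_1^2+\theta_1)+\theta_2$, while in types B and D one gets the same operator $D=\tfrac12\bigl((\theta_1^+)^2+\theta_1^+\bigr)+\tfrac12\bigl((\theta_1^-)^2+\theta_1^-\bigr)+\theta_2^+-\theta_2^--\theta_1^+\theta_1^-$. Applying $D$ to both sides of the Cycle Index Formula and then substituting $x_j\mapsto(1-t^j)^{-1}$ (resp.\ $x_j^+\mapsto(1-t^j)^{-1}$ and $x_j^-\mapsto(1+t^j)^{-1}$), the left-hand side becomes $\sum_n z^n P_t(\R[V_n,\mathrm{Sym}^2V_n^*]^{W_n})$ by Molien's formula and the formulas for $\det(1-tg)$ listed above.

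On the right-hand side, $D$ acts on an exponential $\exp(S)$ simply: $\theta_j\exp(S)=(\theta_j S)\exp(S)$, and $\theta_j S$ is one of the monomials $z^jx_j/j$ or $z^jx_j^\pm/(2j)$, so a short Leibniz computation gives $D\exp(S)=R(z;x)\exp(S)$ with $R$ an explicit polynomial of degree $2$ in $z$. After the substitution, $R$ collapses to a rational function, using only elementary identities such as $\tfrac1{2(1-t)^2}+\tfrac1{2(1-t^2)}=\tfrac1{(1-t)(1-t^2)}$; one finds $R|_{\mathrm{sub}}=\tfrac{z}{1-t}+\tfrac{z^2}{(1-t)(1-t^2)}$ in type A, and $R|_{\mathrm{sub}}=\tfrac{z}{1-t^2}+\tfrac{z^2t^2}{(1-t^2)(1-t^4)}$ in type B. Since $\exp(S)|_{\mathrm{sub}}=\sum_m z^m P_t(\R[V_m]^{W_m})$ by the previous proposition, the coefficient of $z^n$ in $R|_{\mathrm{sub}}\cdot\exp(S)|_{\mathrm{sub}}$ is a combination of $P_t(\R[V_{n-1}]^{W_{n-1}})$ and $P_t(\R[V_{n-2}]^{W_{n-2}})$; dividing by $P_t(\R[V_n]^{W_n})$ and using that the degrees of $W_n$ are $1,\dots,n$ in type A and $2,4,\dots,2n$ in type B (so the ratios become $1-t^n$, $(1-t^{n-1})(1-t^n)$, resp.\ $1-t^{2n}$, $(1-t^{2n-2})(1-t^{2n})$) yields the stated formulas.

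The delicate case is type D, where the right-hand side of the Cycle Index Formula is a sum $\exp(\Sigma_+)+\exp(\Sigma_-)$ of two exponentials, so after applying $D$ and substituting one is left with $R_+|_{\mathrm{sub}}\cdot A+R_-|_{\mathrm{sub}}\cdot B$ where $A=\exp(\Sigma_+)|_{\mathrm{sub}}$ and $B=\exp(\Sigma_-)|_{\mathrm{sub}}$ are individually \emph{not} Poincar\'e series of invariant rings; here $R_+|_{\mathrm{sub}}=\tfrac{z}{1-t^2}+\tfrac{z^2t^2}{(1-t^2)(1-t^4)}$ and $R_-|_{\mathrm{sub}}=\tfrac{zt}{1-t^2}+\tfrac{z^2}{(1-t^2)(1-t^4)}$. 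The way out is to exploit three facts: $B(z,t)=A(zt,t)$, immediate from the shapes of $\Sigma_\pm$ under the substitution; $A(z,t)=\prod_{k\ge0}(1-zt^{2k})^{-1}$, so the coefficients $a_m$ of $A=\sum_m a_m z^m$ obey $a_{m-1}=(1-t^{2m})a_m$; and $A+B=\sum_m z^m P_t(\R[V_m]^{W_m})$ by the previous proposition, which forces $P_t(\R[V_m]^{W_m})=(1+t^m)a_m$. Taking the coefficient of $z^n$ in $R_+|_{\mathrm{sub}}A+R_-|_{\mathrm{sub}}B$ gives $\tfrac{1+t^n}{1-t^2}a_{n-1}+\tfrac{t^2+t^{n-2}}{(1-t^2)(1-t^4)}a_{n-2}$; dividing by $P_t(\R[V_n]^{W_n})=(1+t^n)a_n$, substituting $a_{n-1}=(1-t^{2n})a_n$ and $a_{n-2}=(1-t^{2n-2})(1-t^{2n})a_n$, and cancelling the factor $1+t^n$ coming from $1-t^{2n}=(1-t^n)(1+t^n)$, one recovers the claimed type-D expression. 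The main obstacle is entirely bookkeeping: pinning down $D$ correctly from $\chi$, and handling the type-D separation together with the final cancellations without slips.
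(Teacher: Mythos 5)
Your proposal is correct and follows essentially the same route as the paper: build the differential operator $D$ from the character (your Euler-operator form $\tfrac12(\theta_1^2+\theta_1)+\theta_2$ agrees with the paper's $\tfrac{x_1^2}{2}\partial_{x_1}^2+x_1\partial_{x_1}+x_2\partial_{x_2}$, and likewise in types B, D), apply it to the cycle index formula, substitute, and extract the coefficient of $z^n$. In fact you supply a detail the paper elides with ``similarly for types B and D'': in type D the two exponentials must be tracked separately since $R_+\neq R_-$ and $A$, $B$ are not individually Poincar\'e series, and your identities $B(z,t)=A(zt,t)$, $a_{m-1}=(1-t^{2m})a_m$, $P_t(\R[V_m]^{W_m})=(1+t^m)a_m$ close that gap correctly (your type-A coefficient $\tfrac{z}{1-t}+\tfrac{z^2}{(1-t)(1-t^2)}$ is also the right one; the paper's displayed $\tfrac{z^2}{(1-t)^2}$ term is missing a factor $\tfrac12$).
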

\begin{proof}
We define a differential operator $D$ such that
$$D\mathbf{x}^{\mathbf{k}}=\mathrm{tr}(\mathrm{Sym}^2 (g))\mathbf{x}^{\mathbf{k}} $$
Namely,
\begin{itemize}
\item In type A,
$$ D=\frac{x_1^2}{2}\frac{\partial^2}{\partial x_1^2}+x_1\frac{\partial}{\partial x_1}+x_2\frac{\partial}{\partial x_2}$$
\item In types B and D,
$$ D= x_1^+\frac{\partial}{\partial x_1^+} +
x_1^-\frac{\partial}{\partial x_1^-} +
\frac{(x_1^+)^2}{2}\frac{\partial^2}{\partial (x_1^+)^2}+
\frac{(x_1^-)^2}{2}\frac{\partial^2}{\partial (x_1^-)^2}+
x_2^+\frac{\partial}{\partial x_2^+}-
x_1^+x_1^-\frac{\partial^2}{\partial x_1^+\partial x_1^-}-
x_2^-\frac{\partial}{\partial x_2^-}$$
\end{itemize}

Apply $D$ to both sides of the appropriate cycle index formula. Then replace $x_j$ with $(1-t^j)^{-1}$ (resp. $x_j^+$ with $(1-t^j)^{-1}$ and $x_j^-$ with $(1+t^j)^{-1}$). By Molien's formula the left hand side is exactly
$$ \sum_{n=1}^\infty z^n P_t(\R[V_n,\mathrm{Sym}^2V_n^*]^{W_n})$$

Now we turn to the right-hand side. First we use the preceding Proposition. For example in type A we get
$$\left(\sum_{n=1}^\infty z^n P_t(\R[V_n]^{W_n})\right)\left( \frac{z^2}{2(1-t^2)} +\frac{z}{(1-t)} +\frac{z^2}{(1-t)^2} \right)$$
and similarly for types B and D. Then we use the fact that
$$P_t(\R[V]^W)=\prod_{i=1}^n\frac{1}{1-t^{d_i}}$$
where $d_1, \ldots d_n$ are the degrees of $W$, which in type A are $1,2,\ldots n$, in type B $2,4,\ldots 2n$ and in type D $2,4,\ldots 2n-2, n$. Finally we take coefficients of $z^n$ on both sides and simplify to get the stated formulas.
\end{proof}

Having calculated the Poincar\'e series of $\R[V,\mathrm{Sym}^2V^*]^W$, we know the degrees (with multiplicities) of the elements in a basis for this module over $\R[V]^W$. The next step is to define explicit invariants whose Hessians have the degrees we just found, and prove that they are linearly independent. We start by fixing sets $\{ \rho_1,\ldots\rho_n\}$ of basic invariants:
\begin{itemize}
\item For type A, $ \rho_j=\frac{1}{j} \sum_{i=1}^n (x_i)^j$;
\item For type B, $ \rho_j=\frac{1}{2j}\sum_{i=1}^n(x_i)^{2j} $; and
\item For type D, $ \rho_j=\frac{1}{2j}\sum_{i=1}^n (x_i)^{2j} $ if $j<n$ and $ \rho_n=x_1\cdots x_n$.
\end{itemize}

In order to identify which sets of Hessians of invariants are linearly independent we evaluate them at a regular vector $v$. We find that when $v$ is a regular eigenvector of a Coxeter element (see \cite{Humphreys}, section 3.16), the Hessians take a simple form. In practice we take the following $v$:
\begin{itemize}
\item For type A, $v=(1,\xi,\xi^2,\ldots ,\xi^{n-1})$, where $\xi=\mathrm{exp}(2\pi i/n)$;
\item For type B, $v=(1,\xi,\xi^2,\ldots ,\xi^{n-1})$, where $\xi=\mathrm{exp}(2\pi i/(2n))$; and
\item For type D, $v=(1,\xi,\xi^2,\ldots, \xi^{n-2},0)$, where $\xi=\exp(2\pi i/(2n-2))$.
\end{itemize}

Note: Strictly speaking these vectors are in the complexification $\mathbb{C}\otimes V$, not in $V=\R^n$. But this is irrelevant as far as linear independence is concerned.

Define the map $\rho :V\to \R^n$ by $\rho(x) =(\rho_1(x),\ldots,\rho_n(x))$. Since $\rho_i$ are a set of basic invariants, any invariant polynomial is of the form $\rho^*f$ for some $f\in\R[y_1,\ldots y_n]$. Then we have:

\begin{proposition}[Chain Rule]
Let $f\in\R[y_1,\ldots y_n]$ be a polynomial map on $\R^n$. In the coordinates $x_i$ of $V$ dual to the standard basis $e_i$, we have:
$$ \mathrm{Hess}(\rho^*f)=\left(\frac{\partial^2 \rho^*f }{\partial x_i \partial x_j} \right)_{i,j}=J^t(\rho^*(\mathrm{Hess}f)) J +\sum_{k=1}^n\left(\rho^*\left(\frac{\partial f}{\partial Y_k}\right)\right)\mathrm{Hess}(\rho_k)$$
where $J$ is the Jacobian matrix $\left( \frac{\partial\rho_i}{\partial x_j} \right)_{i,j}$.
\end{proposition}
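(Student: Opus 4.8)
The plan is simply to apply the multivariable chain rule twice and then do some index bookkeeping. Write $\rho^*f = f\circ\rho$, so that $(\rho^*f)(x) = f(\rho_1(x),\dots,\rho_n(x))$. Differentiation and composition of polynomials commute in the usual way, so the first step is the ordinary chain rule:
$$\frac{\partial(\rho^*f)}{\partial x_j} = \sum_{k=1}^n \rho^*\!\left(\frac{\partial f}{\partial y_k}\right)\frac{\partial\rho_k}{\partial x_j}.$$

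The second step is to differentiate this expression once more with respect to $x_i$, applying the Leibniz rule to each summand. The derivative of the coefficient $\rho^*(\partial f/\partial y_k)$ is handled by a second application of the chain rule, producing $\sum_{l=1}^n \rho^*(\partial^2 f/\partial y_l\partial y_k)\,\partial\rho_l/\partial x_i$, while the derivative of $\partial\rho_k/\partial x_j$ is just $\partial^2\rho_k/\partial x_i\partial x_j$. Collecting the two kinds of terms gives
$$\frac{\partial^2(\rho^*f)}{\partial x_i\partial x_j} = \sum_{k,l}\frac{\partial\rho_l}{\partial x_i}\,\rho^*\!\left(\frac{\partial^2 f}{\partial y_l\partial y_k}\right)\frac{\partial\rho_k}{\partial x_j} \;+\; \sum_{k=1}^n \rho^*\!\left(\frac{\partial f}{\partial y_k}\right)\frac{\partial^2\rho_k}{\partial x_i\partial x_j}.$$

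The final step is to translate this into matrix form. With the convention $J = (\partial\rho_i/\partial x_j)_{i,j}$, one has $(J^t)_{i,l} = \partial\rho_l/\partial x_i$, so the double sum is exactly the $(i,j)$ entry of $J^t\,\rho^*(\mathrm{Hess}\,f)\,J$; the second sum is the $(i,j)$ entry of $\sum_{k}\rho^*(\partial f/\partial y_k)\,\mathrm{Hess}(\rho_k)$. This yields the asserted identity. There is essentially no obstacle here — the proposition is a formal consequence of the chain rule — and the only point requiring a moment's care is the transpose/index convention in the quadratic term, so that the two Jacobian factors $J^t$ and $J$ land on the correct sides of $\rho^*(\mathrm{Hess}\,f)$.
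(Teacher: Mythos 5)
Your computation is correct: two applications of the chain rule plus the Leibniz rule, with the index bookkeeping for $J^t$ and $J$ handled properly. The paper states this proposition without proof, treating it as exactly the routine calculation you carried out, so your argument supplies the standard (and intended) justification.
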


\begin{proposition}
Let $M_k$ denote the $k\times k$ matrix with ones in the antidiagonal and zeros everywhere else. Here are the matrices $(J^T)^{-1}\mathrm{Hess}(\rho_i)J^{-1}$ evaluated at $v$ chosen above:
\begin{itemize}
\item In type A,
$$\frac{i-1}{n}\begin{bmatrix}
 M_{i-1} & 0 \\
0 & M_{n-i+1}
\end{bmatrix}$$
\item In type B,
$$\frac{2i-1}{n}
\begin{bmatrix}
 M_{i-1} & 0 \\
0 & M_{n-i+1}
\end{bmatrix}$$
\item In type D,
$$(2i-1)
\begin{bmatrix}
\frac{1}{n-1}M_{i-1} & 0 & 0\\
0 & \frac{1}{n-1}M_{n-i} &0\\
0 & 0 & (-1)^n
\end{bmatrix}$$
for $1\leq i\leq n-1$ and
$$ (J^t)^{-1}\mathrm{Hess}(\rho_n)J^{-1}=
\begin{bmatrix}
 0 & 0\\
0 & M_2
\end{bmatrix}$$
\end{itemize}
\end{proposition}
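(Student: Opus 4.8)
The plan is a direct computation in each type, exploiting the very special structure of the basic invariants $\rho_j$ (which are power sums) and of the vector $v$ (which is a geometric progression in the roots of unity). First I would record the first and second partial derivatives of the $\rho_j$. In type A, $\rho_j = \frac1j\sum_i x_i^j$, so $\partial\rho_j/\partial x_k = x_k^{j-1}$ and $\partial^2\rho_j/\partial x_k\partial x_l = (j-1)x_k^{j-2}\delta_{kl}$; in particular $\mathrm{Hess}(\rho_j)$ is the diagonal matrix $(j-1)\,\mathrm{diag}(x_1^{j-2},\dots,x_n^{j-2})$, and the Jacobian $J = (\partial\rho_i/\partial x_j)_{i,j} = (x_j^{i-1})_{i,j}$ is a Vandermonde matrix in the entries of $x$. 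Types B and D are the same with $x_i$ replaced by $x_i^2$ throughout (plus the exceptional generator $\rho_n = x_1\cdots x_n$ in type D, whose Hessian is the off-diagonal matrix with $(\rho_n/ (x_kx_l))$ in the $(k,l)$ slot for $k\ne l$).

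Next I would substitute $v$. The key point is that when $v=(1,\xi,\xi^2,\dots,\xi^{n-1})$ with $\xi$ a primitive $n$-th root of unity (type A), the Vandermonde matrix $J(v) = (\xi^{(a-1)(b-1)})_{a,b}$ is, up to normalization, the DFT matrix, so $J(v)^{-1}$ has the explicit entries $\frac1n\xi^{-(a-1)(b-1)}$. One then computes $(J^t)^{-1}\mathrm{Hess}(\rho_i)(v)\,J^{-1}$ as a triple matrix product: conjugating the diagonal matrix $(i-1)\,\mathrm{diag}(v_k^{i-2}) = (i-1)\,\mathrm{diag}(\xi^{(k-1)(i-2)})$ by the (inverse) DFT produces, entry by entry, a geometric sum $\frac{i-1}{n^2}\sum_k \xi^{-(a-1)(k-1)}\xi^{(k-1)(i-2)}\xi^{-(k-1)(b-1)}$ which vanishes unless $-(a-1)+(i-2)-(b-1)\equiv 0\pmod n$, i.e. unless $a+b = i$ or $a+b=i+n$. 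This is exactly the antidiagonal support described by the block matrix $\begin{bmatrix}M_{i-1}&0\\0&M_{n-i+1}\end{bmatrix}$, and the surviving entries all equal $\frac{i-1}{n}$. Types B and D are handled by the identical computation after the substitution $x_i\mapsto x_i^2$, noting that in type B $v_k^2 = \xi^{2(k-1)}$ with $\xi$ a primitive $2n$-th root of unity gives a genuine $n$-th root of unity, so the DFT identities again apply; the shift $\rho_j=\frac1{2j}\sum x_i^{2j}$ turns the coefficient $i-1$ into $2i-1$. In type D the last coordinate of $v$ is $0$, which decouples the $n$-th slot and accounts for the isolated $(-1)^n$ (from $v_n^{2i-2}$ interacting with the sign in $\rho_n = x_1\cdots x_n$) and for the separate formula for $\mathrm{Hess}(\rho_n)$, whose only nonzero entries at $v$ involve the two indices where $\rho_n/(x_kx_l)$ survives, giving the $M_2$ block.

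The main obstacle is bookkeeping rather than conceptual: one must track the normalization constants and the modular arithmetic of the exponents carefully so that the vanishing pattern comes out exactly as the stated block-antidiagonal matrices, and in type D one must treat the zero coordinate and the non-power-sum generator $\rho_n$ by hand. There is also a small point to check, that $v$ is indeed a regular vector (so that $J(v)$ is invertible) — this follows because a Vandermonde matrix in distinct entries is nonsingular, and the chosen roots of unity are distinct. Once these substitutions and geometric-sum evaluations are carried out, the stated forms follow immediately, and the simplicity of these matrices is what makes the subsequent linear-independence argument for the chosen sets of Hessians tractable.
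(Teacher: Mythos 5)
Your proposal is correct and follows essentially the same route as the paper: both arguments hinge on the fact that $J(v)$ is (up to diagonal rescaling) the DFT matrix in roots of unity, so everything reduces to the orthogonality sum $\sum_{a=1}^n\xi^{(l-m)a}=n\,\delta_{lm}$ --- the only cosmetic difference is that you compute $(J^t)^{-1}\mathrm{Hess}(\rho_i)(v)\,J^{-1}$ directly from the explicit inverse $\tfrac1n\xi^{-(a-1)(b-1)}$, whereas the paper verifies the equivalent identity $J^t A J=\mathrm{Hess}(\rho_i)(v)$ for the claimed block-antidiagonal $A$. One small correction to your type-D sketch (a case the paper also dismisses as ``similar''): for $i<n$ the $(-1)^n$ in the corner has nothing to do with $\rho_n=x_1\cdots x_n$; it arises because $J(v)$ is block diagonal with $(n,n)$ entry $\prod_{m<n}v_m$, whose inverse square is $(-1)^n$.
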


\begin{proof}
For type A, since $ \rho_i=\frac{1}{i} \sum_{j=1}^n (x_j)^i$ and $v=(1,\xi,\xi^2,\ldots ,\xi^{n-1})$, $J(v)$ is a Vandermonde matrix whose $(a,b)$ entry equals $J_{ab}=\xi^{(a-1)(b-1)}$; and Hess$(\rho_i)(v)=(2i-1)\cdot \mathrm{diag}(1,\xi^{i-2},\ldots \xi^{(i-2)(n-1)})$.

We want to show that 
$$n \cdot\mathrm{diag}(1,\xi^{i-2},\ldots \xi^{(i-2)(n-1)})=J^T \begin{bmatrix}
 M_{i-1} & 0 \\
0 & M_{n-i+1}
\end{bmatrix}J$$
Indeed, the $(l,m)$ entry on right-hand side is equal to 
$$ \sum_{a,b=1}^n J_{l,a} N_{a,b} J_{b,m}$$
where $N_{a,b}=1$ if $a+b=i$ or $n+i$, and $0$ otherwise. Thus the sum becomes
\begin{align*} &\sum_{a+b=i, i+n} \xi^{(l-1)(a-1) + (b-1)(m-1)}\\
&=\sum_{a=1}^{i-1}\xi^{(l-1)(a-1)+(i-a-1)(m-1)} + \sum_{a=i}^{n}\xi^{(l-1)(a-1)+(n+i-a-1)(m-1)}\\ 
&= \sum_{a=1}^{n}\xi^{(l-1)(a-1)+(i-a-1)(m-1)}\\
&=\xi^{(i-1)(m-1)-(l-1)}\sum_{a=1}^n\xi^{(l-m)a}
\end{align*}
In the sum over $a=i,\ldots n$ we have replaced $(n+i-a-1)$ with $(i-a-1)$ because $\xi^n=1$.
The last sum $\sum_{a=1}^n\xi^{(l-m)a}$ equals $n$ if $\xi^{l-m}=1$, that is, $l=m$; and zero otherwise. In the first case $(i-1)(m-1)-(l-1)=(i-2)(m-1)$, as wanted.

The proofs for types B and D are similar.
\end{proof}

\begin{theorem3}
\label{classical}
With the notations above, let 
$$ T=\{\rho_i\ |\ 1\leq i\leq n\}\cup\{\rho_i\rho_j\ |\ 1\leq i\leq j\leq n\}-T_0$$
where $T_0$ is a subset with $n$ elements satisfying:
\begin{itemize}
\item in type A, $T_0$ contains $\rho_1$ and one $\rho_i\rho_j$ with $i+j=k$ for each $k=n+2,\ldots 2n$;
\item in type B, $T_0$ has one $\rho_i\rho_j$ with $i+j=k$ for each $k=n+1,\ldots 2n$;
\item In type D, $T_0$ has $\rho_{n-1}\rho_n$ and one $\rho_i\rho_j$ with $i+j=k$ and $i,j\leq n-1$ for each $k=n,\ldots 2n-2$;

\end{itemize}

Then
$$ \{ \mathrm{Hess}(Q), \quad Q\in T\}$$
is a basis for $\R[V,\mathrm{Sym}^2V^*]^W$ as a free module over $\R[V]^W$.
\end{theorem3}

\begin{proof}
We need to show two things: First, that the chosen Hessians have the correct degrees, as dictated by Proposition \ref{Poincare}; and second, that they are linearly independent over $\R[V]^W$. Together these imply that $\R[V,\mathrm{Sym}^2V^*]^W$ and the submodule generated by these Hessians have the same Poincar\'e series and thus must coincide.

\begin{enumerate}
\item Showing that  $ \{ \mathrm{Hess}(Q), \quad Q\in T\}$ have the correct degrees consists of an algebraic manipulation of the formulas given in Proposition \ref{Poincare}. For example in type B,
$$\qquad\quad\frac{1-t^{2n}}{1-t^2}+\frac{(1-t^{2n-2})(1-t^{2n})t^2}{(1-t^2)(1-t^4)}=
t^{-2}\sum_{i=1}^{n} t^{2i} + t^{-2}\!\!\!\!\sum_{1\leq i\leq j\leq n}\!\!\!\!t^{2(i+j)} -t^{-2}\sum_{k=n+1}^{2n}\!\! t^{2k}$$
Indeed, on the right hand side of this equation the first two sums correspond to $\{\rho_i\}$ and $\{\rho_i\rho_j\}$ and the last sum to $T_0$. We omit the proofs for the other two types, which are similar.

\item To show they are linearly independent, we will evaluate at the regular vector $v\in V$ and use the preceding Proposition.

    The chain rule says that
    $$ \mathrm{Hess}(\rho_i\rho_j)=J^tE_{ij} J +\rho_i\mathrm{Hess}(\rho_j)+\rho_j\mathrm{Hess}(\rho_i)$$
    where $E_{ij}=$Hess$(y_iy_j)$.

    Since all the basic invariants belong to $T$ (except for type A, where $\rho_1$ is linear and thus has zero Hessian), we may replace each Hess$(\rho_i\rho_j)$ with $J^tE_{ij} J$. Since $J(v)$ is invertible, we can multiply with  $(J^T)^{-1}$ on the left and with $J^{-1}$ on the right and are left to prove that the following set of matrices is linearly independent:
    $$\mathcal{S}=\{E_{ij} \ |\ \rho_i\rho_j\in T \}\cup\{(J^t)^{-1}\mathrm{Hess}(\rho_i)J^{-1}\ |\ \rho_i\in T\} $$
This follows from the shapes of the matrices $(J^t)^{-1}\mathrm{Hess}(\rho_i)J^{-1}$ we computed in the previous Proposition, together with the fact that $\{ E_{ij}\ |\ 1\leq i\leq j\leq n\}$ is a basis (over $\R$) for the space os symmetric matrices.

For example in type B, from the description of $T$ given in the statement, we can think of $\mathcal{S}$ as obtained from $\{ E_{ij} \}$ thus: For each $k=n+1, \ldots, 2n$, replace one $E_{ij}$ where $i+j=k$ with 
$$(J^T)^{-1}\mathrm{Hess}(\rho_{k-n})J^{-1}= \frac{2(k-n)-1}{n}\cdot \frac{1}{2}\sum_{a+b=k-n,\  k}\!\!\!\! E_{ab}$$
Note that the removed $E_{ij}$ appears in this sum. Ordering $\mathcal{S}$ and $\{ E_{ij} \}$ appropriately, the matrix of $\mathcal{S}$ in terms of $E_{ij}$ is upper triangular with non-zero diagonal, so that $\mathcal{S}$ is linearly independent.

Similarly for types A and D. 
\end{enumerate}
\end{proof}

Note that the method used to compute the Poincar\'e series of $\R[V,\mathrm{Sym}^2 V^*]^W$ in Proposition \ref{Poincare} applies equally well to any type of tensor. In type A one simply  writes the character as a polynomial in the numbers $k_i$, and defines a differential operator $D$ by replacing each $k_i$ in this polynomial with $x_i\frac{\partial}{\partial x_i}$, and multiplication with composition of operators. Similarly for types B and D.

\subsection{Exceptional Groups}

Finally we prove the Hessian Theorem for the six exceptional finite reflection groups $W\subset O(V)$ usually called by the names of their Dynkin diagrams: $H_3$, $H_4$, $F_4$, $E_6$, $E_7$ and $E_8$. Note that the subscript denotes the rank $n=$dim$(V)$. 
In all cases our proof relies on calculations performed by a computer running GAP 3 (see \cite{GAP3}) using the package CHEVIE, which ultimately rely only on integer arithmetic. For the actual code that was used, see 

http://www.nd.edu/\~{}rmendes/sym2.txt

Recall a way of describing $W\subset O(V)$ from its Cartan matrix $C=(C_{ij})$. $V$ has a basis $r_1,\ldots r_n$ of simple roots with corresponding co-roots $r^{\vee}_1,\ldots r^{\vee}_n$. This means that $W$ is generated by the reflections in the hyperplanes $\ker(r^{\vee}_i)$ given by:
$$ R_i :v \mapsto v-r^{\vee}_i (v) r_i \qquad i=1,\ldots n$$ 
Expressing $v\in V$ in the basis of simple roots $v=a_1r_1+\ldots a_nr_n$, we get
$$R_i(v)=v-\left(\sum_j a_jr_i^{\vee}(r_j)\right)r_i$$
The coefficients $r_i^{\vee}(r_j)=C_{ij}$ form the Cartan matrix.

Here are the Cartan matrices for $H_3$, $H_4$ and $F_4$: (where $\zeta = \exp(2\pi i/5)$)

$$H_3:\  \left( \begin{array}{ccc} 
2 & \zeta^2+\zeta^3 & 0 \\
\zeta^2+\zeta^3 & 2 & -1 \\
0 & -1 & 2 
\end{array}\right)  ,\quad
H_4:\ \left( \begin{array}{cccc} 
2 & \zeta^2+\zeta^3 & 0 &0 \\
\zeta^2+\zeta^3 & 2 & -1 & 0 \\
0 & -1 & 2 & -1 \\
0 & 0 & -1 & 2
\end{array}\right) $$ $$
F_4:\ \left( \begin{array}{cccc} 
\phantom{-}2 & -1 & \phantom{-}0 & \phantom{-}0 \\
-1 & \phantom{-}2 & -1 & \phantom{-}0 \\
\phantom{-}0 & -2 & \phantom{-}2 & -1 \\
\phantom{-}0 & \phantom{-}0 & -1 & \phantom{-}2
\end{array}\right)$$
For the Cartan matrices in type E, refer to the tables at the end of \cite{Bourbaki}.

We start the proof of the Hessian theorem by describing how the program computes the polynomial
$$\frac{P_t(\R[V,\mathrm{Sym}^2V^*]^W)}{P_t(\R[V]^W)}$$
(see the general framework on page \pageref{generalframework})

We need to recall a few facts. Let $I$ be the ideal in $\R[V]$ generated by the homogeneous invariants of positive degree. The quotient $\R[V]/I$ is known to be isomorphic, as a $W$-representation, to the regular representation (see Theorem B in \cite{Chevalley55}), but it is also a graded vector space. Fixing an irreducible representation/character $\xi$, the Poincar\'e polynomial FD$_\xi(t) $ of the subspace of $\R[V]/I$ with components isomorphic to $\xi$ is called the \emph{fake degree} of $\xi$ . Moreover $\R[V]$ is isomorphic to $(\R[V]/I)\otimes\R[V]^W$ . Thus the Poincar\'e series of the vector subspace in $\R[V]$ given by the direct sum of all irreducible subspaces isomorphic to $\xi$ equals FD$_\xi(t) P_t(\R[V]^W)$.

The way the program computes $P_t(\R[V,\mathrm{Sym}^2V^*]^W)$ is as follows:

It first computes the character $\chi$ of Sym$^2V^*$, and decomposes it into a sum of irreducible characters, using character tables that come with CHEVIE.
$$ \chi=\sum_{\xi\text{ irreducible}} c_\xi \xi$$

It then uses a command in CHEVIE that returns the fake degrees of the irreducible characters $\xi$, and computes
$$ \sum_\xi c_\xi \mathrm{FD}_\xi(t)$$
Using Schur's lemma one sees that this equals
$$\frac{P_t(\R[V,\mathrm{Sym}^2V^*]^W)}{P_t(\R[V]^W)}$$

Here are the outputs:
$$ \begin{array}{l|l}
 & P_t(\R[V,\mathrm{Sym}^2V^*]^W) / P_t(\R[V]^W)=\\
 \hline\\
H_3 & t^{10}+t^8+t^6+t^4+t^2+1\\
H_4 & t^{38}+t^{30}+t^{28}+t^{22}+t^{20}+t^{18}+t^{12}+t^{10}+t^2+1\\
F_4 & t^{14}+t^{12}+2t^{10}+t^8+2t^6+t^4+t^2+1\\
E_6 &  t^{16} + t^{15} + t^{14} + t^{13} + 2t^{12} + t^{11} +2t^{10} + 2t^9 + \\
&+2t^8 + t^7 + 2t^6 + t^5 + t^4 + t^3 + t^2 + 1\\
E_7 &  t^{26} + t^{24} + 2t^{22} + 2t^{20} + 3t^{18} + 3t^{16} +
3t^{14}  + 3t^{12} +\\
&+ 3t^{10} + 2t^8 + 2t^6 + t^4 + t^2 + 1\\
E_8 & t^{46} + t^{42} + t^{40} + t^{38} + 2t^{36} + 2t^{34} + t^{32} + 3t^{30} + 2t^{28} + 2t^{26} + 3t^{24} + \\
& +2t^{22} +  2t^{20} + 3t^{18} + t^{16} + 2t^{14} + 2t^{12} + t^{10} + t^8 + t^6 + t^2 + 1 
\end{array}$$

Now we turn to the task of defining an explicit set of basic invariants $\rho_1,\ldots \rho_n\in \R[V]^W$. The degrees $d_i=\mathrm{deg}(\rho_i)$ are well known: (see tables at the end of \cite{Bourbaki})
$$ \begin{array}{l|l}
&\text{degrees } d_1,\ldots d_n\\
\hline
H_3 & 2,6,10\\
H_4 & 2,12, 20, 30\\
F_4 & 2,6,8,12 \\
E_6 & 2,5,6,8,9,12\\
E_7 & 2,6,8,10,12,14,18\\
E_8 & 2,8,12,14,18,20,24,30
\end{array}$$

We choose for each group a regular vector $v\in V$ and identify it with the row vector of its coefficients in the basis of the simple roots $r_i$. We also take one non-zero $\lambda\in V^*$ with minimal $W$-orbit size, namely the one which in the basis $\{r^{\vee}_i\}$ of simple co-roots is identified with the row vector
$$ \lambda=(0,\ldots 0,1)\cdot C^{-1} $$

Then the program computes the $W$-orbit $\mathcal{O}$ of $\lambda$. Here are our choices of $v$ and the number of elements in the orbit $\mathcal{O}$:
$$ \begin{array}{l|l|l}
 & v \text{ (in the basis } \{r_i\}) &  |\mathcal{O}|\\
\hline
H_3 & (1,2,3) & 12\\
H_4 & (1,2,3,5) & 20\\
F_4 & (2,-3,5,7)  & 24\\
E_6 & (2,-5,41,7,-9,110) & 27\\
E_7 & (2,-5,41,7,-9,110 ,-87) & 56\\
E_8 & (2,-5,41,7,-9,110 ,-87,11) & 240
\end{array}$$

Since $W$ permutes the linear polynomials in $\mathcal{O}$, for each natural number $m$ we get a $W$-invariant polynomial of degree $m$
$$\psi_m=\sum_{\lambda \in \mathcal{O}} \lambda^m$$

The invariants constructed this way are called the Chern classes associated to the orbit $\mathcal{O}$. See \cite{NeuselSmith} chapter 4.

\begin{proposition}
The polynomials $\rho_i=\psi_{d_i}$, $i=1,\ldots n$, form a set of basic invariants, and $v$ is indeed a regular vector.
\end{proposition}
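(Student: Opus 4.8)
The plan is to prove both assertions simultaneously through the Jacobian criterion. Recall (see \cite{Humphreys}, Section 3) that if $f_1,\ldots,f_n\in\R[V]^W$ are homogeneous invariants whose degrees are exactly the degrees $d_1,\ldots,d_n$ of $W$, then the following are equivalent: $\{f_1,\ldots,f_n\}$ is a set of basic invariants; the $f_i$ are algebraically independent; the Jacobian determinant $\det(\partial f_i/\partial x_j)$ is not identically zero. Moreover, when these hold that Jacobian is a nonzero scalar multiple of $\prod_H \ell_H$, the product running over the reflection hyperplanes $H$ of $W$, with $\ell_H\in V^*$ a linear form vanishing on $H$; hence it vanishes exactly on the union of the reflection hyperplanes, whose complement is by definition the set of regular vectors.

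Granting this, the proof reduces to one computation. By construction $\psi_m=\sum_{\lambda\in\mathcal{O}}\lambda^m$ is homogeneous of degree $m$, so $\rho_i=\psi_{d_i}$ is a homogeneous $W$-invariant of degree $d_i$, and $d_1,\ldots,d_n$ is precisely the list of degrees of $W$ recorded in the table above (taken from \cite{Bourbaki}). Therefore it suffices to check that the Jacobian matrix $J=(\partial\rho_i/\partial x_j)$, evaluated at the chosen vector $v$, satisfies $\det J(v)\neq 0$. This is exactly what the program does: it builds the orbit $\mathcal{O}$, writes the $\psi_{d_i}$ as explicit polynomials in the coordinates dual to the simple roots, differentiates, substitutes $v$, and verifies $\det J(v)\neq 0$ using exact arithmetic.

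Once $\det J(v)\neq 0$ is known, everything follows. The polynomial $\det J$ is then not identically zero, so the $\rho_i$ are algebraically independent; being homogeneous invariants of the correct degrees, they form a set of basic invariants. In particular each $\rho_i$ is a nonzero polynomial (a vanishing $\rho_i$ would make a row of $J$ identically zero), so the degree statement is genuine. For regularity, now that $\{\rho_i\}$ is a set of basic invariants, $\det J$ is a nonzero constant times $\prod_H \ell_H$; since it does not vanish at $v$, the vector $v$ lies on no reflection hyperplane and is thus regular.

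The only real content is the computer verification that $\det J(v)\neq 0$, and the choices of $\lambda$ (of minimal $W$-orbit size) and of the explicit $v$ are made precisely to keep that determinant small and exactly computable. The one point to keep in mind is that for $H_3$ and $H_4$ the matrix $J(v)$ has entries in the cyclotomic field $\mathbb{Q}(\zeta)$, $\zeta=e^{2\pi i/5}$, so non-vanishing must be certified symbolically rather than numerically; this is handled by the exact arithmetic in GAP/CHEVIE, and is the main (and only genuinely delicate) step.
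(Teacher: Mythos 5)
Your proof is correct and follows exactly the paper's argument: compute the Jacobian determinant of the $\rho_i$ at $v$, check by computer that it is nonzero, and deduce simultaneously that the $\rho_i$ are algebraically independent (hence basic invariants, having the right degrees) and that $v$ avoids all reflecting hyperplanes since the Jacobian is proportional to $\prod_H \ell_H$. The added remark about exact cyclotomic arithmetic for $H_3$ and $H_4$ is a sensible elaboration but does not change the approach.
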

\begin{proof}
Let $J$ be the Jacobian matrix
$$J= \left( \frac{\partial\rho_i}{\partial r^{\vee}_j} \right)_{i,j}=\left( \sum_{\lambda \in \mathcal{O}} d_i\lambda^{d_i-1}\frac{\partial\lambda}{\partial r^{\vee}_j}\right)_{i,j} $$  The program computes its determinant, evaluates it at the vector $v$, and checks that the value is non-zero. This proves both that $\rho_i$ are algebraically independent (see Proposition 3.10 in \cite{Humphreys}) and hence a set of basic invariants because they have the right degrees; and that $v$ is indeed a regular vector, that is, does not belong to any of the reflecting hyperplanes (see section 3.13 in \cite{Humphreys}). \end{proof}

We point out that L. Flatto and M. Weiner studied the set of all $\lambda\in V^*$ that make the $\rho_i=\psi_{d_i}$ constructed above a set of basic invariants. They produce a distinguished set of basic invariants $J_1, \ldots J_n$, determined up to non-zero constants, such that $\lambda\in V^*$ gives rise to a set of basic invariants if and only if $J_i(\lambda)\neq 0$ for all $i$ --- see \cite{FlattoWeiner69,Flatto70} for more details.

\begin{theorem3}
\label{exceptional}
Let $W\subset O(V)$ be one of the six exceptional finite reflection groups, and $\rho_1, \ldots \rho_n$ the set of basic invariants described above. Let $ T\subset \{\rho_i\} \cup \{\rho_i\rho_j\}$ be a subset with $n(n+1)/2$ elements such that $T$ contains $\{\rho_i\}$ and
$$\sum_{Q\in T} t^{\deg(Q)-2} =  \frac{P_t(\R[V,\mathrm{Sym}^2V^*]^W)}{P_t(\R[V]^W)}$$

There is at least one such $T$, and for each one,
$ \{ \mathrm{Hess}(Q)\ |\ Q\in T\}$ is a basis for $\R[V,\mathrm{Sym}^2V^*]^W$ as a free module over $\R[V]^W$.
\end{theorem3}

\begin{proof}

First the program finds a list of all subsets $T$ satisfying the condition in the statement of the theorem. The number of elements in this list (choices for $T$) are:

$$\begin{array}{l|l|l|l|l|l|l}
& H_3 & H_4 &  F_4  & E_6 & E_7 & E_8 \\
\hline
\text{choices }& 2 & 2 & 2 & 12 & 48 & 96\\
\end{array}$$

For each $T$, the program constructs a square matrix $M$ of size $n(n+1)/2$. The rows are in correspondence with the set $\mathcal{H}= \{ \mathrm{Hess}(Q)\ |\ Q\in T\}$ , and the columns with the set $\mathcal{P}$ of upper triangular positions of an $n\times n$ matrix. The entry of $M$ associated with $\mathrm{Hess}(Q)\in \mathcal{H}$ and a position $(a,b)\in\mathcal{P}$ is the $(a,b)$-entry of Hess$(Q)(v)$, that is, $$\frac{\partial^2 Q} { \partial r^{\vee}_a \partial r^{\vee}_b} (v)$$

Then it proceeds to compute the determinant of $M$ and checks that it is non-zero. This implies that $\mathcal{H}$ is linearly independent at $v$, hence over $\R[V]$, and in particular over $\R[V]^W$.

Therefore span$_{\R[V]^W}\mathcal{H}$ is a submodule of $\R[V,\mathrm{Sym}^2V^*]^W$ with the same Poincar\'e series, and so they must coincide.
\end{proof}

\bibliographystyle{plain}
\bibliography{ref}

\end{document}